\documentclass[11pt,english]{smfart}

\usepackage[english]{babel}

\usepackage{amssymb,url,xspace,smfthm}
\usepackage{tikz}
\usetikzlibrary{arrows}
\usepackage{hyperref}

\makeatletter
    \def\ps@copyright{\ps@empty
    \def\@oddfoot{\hfil\small\copyright 2012, \SMF}}
\makeatother

\newcommand{\SMF}{Soci\'et\'e Ma\-th\'e\-ma\-ti\-que de France}
\newcommand{\BibTeX}{{\scshape Bib}\kern-.08em\TeX}
\newcommand{\T}{\S\kern .15em\relax }
\newcommand{\AMS}{$\mathcal{A}$\kern-.1667em\lower.5ex\hbox
        {$\mathcal{M}$}\kern-.125em$\mathcal{S}$}

\usepackage[all]{xy}
\input yoga.sty

\tolerance 400
\pretolerance 200

\title{Group schemes out of birational group laws, N\'eron models}
\date{\today}

\author[Edixhoven]{Bas Edixhoven}
\address[Edixhoven]{Mathematisch Instituut \\
Universiteit Leiden \\
Postbus 9512 \\
2300 RA Leiden \\
Nederland}
\email{edix@math.leidenuniv.nl}

\author[Romagny]{Matthieu Romagny}
\address[Romagny]{Institut de Recherche Math\'ematique de Rennes \\
Universit\'e Rennes 1 \\
Campus de Beaulieu \\
35042 Rennes Cedex \\
France}
\email{matthieu.romagny@univ-rennes1.fr}

\keywords{group scheme, birational group law, N\'eron model. \\
$\hbox{\quad \enskip }$ {\bf   MSC 2010:\!} 14L15, 11G05, 11G10,
    14K99.}

\begin{document}

\begin{abstract}
In this note, we present the theorem of extension
    of birational group laws in both settings of classical varieties
    (Weil) and schemes (Artin). We improve slightly the original proof
    and result with a more direct construction of the group extension,
a discussion of its separation properties,
    and the systematic use of algebraic spaces. We also explain the
    important application to the construction of N\'eron models of
    abelian varieties. This note grew out of lectures given by Ariane
    M\'ezard and the second author at the Summer School "Sch\'emas en
    groupes" held in the CIRM (Luminy) from 29 August to 9 September,
    2011.

\end{abstract}

\begin{altabstract}
Dans cette note, nous pr\'esentons le th\'eor\`eme d'extension d'une
loi de groupe birationnelle en un groupe alg\'ebrique,
dans le cadre des vari\'et\'es alg\'ebriques
classiques (Weil) et des sch\'emas (Artin). Nous am\'eliorons
l\'eg\`erement le r\'esultat original et sa preuve en donnant une
construction plus directe du groupe,
en apportant des compl\'ements sur ses propri\'et\'es de s\'eparation,
et en utilisant syst\'ematiquement les espaces alg\'ebriques.
Nous expliquons aussi l'application importante \`a la construction
des mod\`eles de N\'eron des vari\'et\'es ab\'eliennes. Cette note
est issue des cours donn\'es par Ariane M\'ezard et le second auteur
\`a l'\'Ecole d'\'et\'e "Sch\'emas en groupes" qui s'est tenue au
CIRM (Luminy) du 29 ao\^ut au 9 septembre 2011.
\end{altabstract}

\maketitle


\vskip-2cm

\section{Introduction}
This paper is devoted to an exposition of the generalization to group
schemes of Weil's theorem in \cite{Wei2} on the construction of a group
from a birational group law, as can be found in Artin's Expos\'e XVIII
in SGA3~\cite{Art}.  In addition, we show how this theorem is used by
N\'eron in order to produce canonical smooth models (the famous {\em
  N\'eron models}) of abelian varieties.

The content of Weil's theorem is to extend a given ``birational group
law'' on a scheme~$X$ to an actual multiplication on a group
scheme~$G$ birational to~$X$. The original motivation of Weil was the
algebraic construction of the Jacobian varieties of
curves~\cite{Wei1}. This construction was extended by Rosenlicht to
generalized Jacobians~\cite{Ros}. Weil's ideas were later used by
Demazure in his thesis in order to show the existence of split
reductive groups over the ring $\mathbf{Z}$ of integers~\cite{Dem2}
and then by N\'eron in order to study minimal models of abelian
varieties~\cite{N}. To our knowledge, these are the three main
applications of the extension theorem.

The construction of split reductive groups by Demazure uses a version
of Weil's theorem written by Artin, valid for flat (maybe non-smooth)
finitely presented group schemes. There, the set-theoretic arguments
of Weil are replaced by sheaf-theoretic arguments.  The main point
then is to show that Weil's procedure gives a sheaf which is
representable; since this sheaf is defined as a quotient by an fppf
equivalence relation, the natural sense in which it is representable
is as an algebraic space (i.e. a quotient of an \'etale equivalence
relation of schemes, see~\ref{conventions_alg_spaces}). However, at the
time when Artin
figured out his adaptation of Weil's result, he had not yet discovered
algebraic spaces. Thus he had to resort at times to ad hoc statements;
for example, his main statement (Theorem~3.7 of \cite{Art}) is a bit
unsatisfying. Nowadays it is more natural to use the language of
algebraic spaces, and this is what we shall do. As an aside, it is
clear that one may as well start from a birational group law on an
{\em algebraic space}, but we do not develop this idea.

Another feature of Artin's proof is that he constructs~$G$ let us say
``in the void'', and that needs a lot of verifications that moreover
are not so structured.  We give a more structured proof of Theorem~3.7
of~\cite{Art}.  The idea is to construct the group space $G$ as a
subfunctor of the $S$-functor in groups~$\calR$ that sends~$T$ to the
group of $T$-birational maps from $X_T$ to itself, as in Section~5.2
of Bosch-L\"utkebohmert-Raynaud \cite{BLR}. We push the construction
of \cite{BLR} a bit further: we show that $\calR$ is a sheaf and we
define $G$ to be the subsheaf of groups generated by the image of $X$
under a morphism that sends $a$ in $X(T)$ to the rational
left-translation by $a$ on~$X_T$.

One technical detail is that whereas Artin requires $X$ to be of
finite presentation, we allow it to be only locally so (that is, maybe
not quasi-compact and quasi-separated). This turns out to need no
modification of our proofs, and may be interesting for instance for
the treatment of N\'eron models of semi-abelian varieties, since these
fail to be of finite type.

A significant difference between \cite{Art} and Section~5.2 of \cite{BLR} is
that \cite{BLR} treats descent only in Chapter~6, after the construction of
groups from birational ones. So, Chapter~5 of \cite{BLR} is more geometric
and less sheaf-theoretic than \cite{Art}. It is a good thing to compare
the two accounts. Here are some considerations.

\begin{enumerate}
\item In \cite{Art}, $S$ is arbitrary, and $X/S$ is faithfully flat and
  of finite presentation, with separated fibres without embedded
  components. The conclusion is that $G/S$ is an algebraic space.
\item In Theorem~5.1/5 of~\cite{BLR}, the scheme $S$ is the spectrum of a
  field or of a discrete valuation ring, and $X/S$ is separated,
  smooth and quasi-compact, and surjective. 
\item In Theorem~6.1/1 of~\cite{BLR}, $S$ is arbitrary, $X/S$ is smooth,
  separated, quasi-compact. The conclusion is that $G/S$ is a
  scheme. For the proof of this theorem, whose main ideas come
from Raynaud~\cite{Ra2}, Theorem~3.7 of \cite{Art} is
  admitted, although it is also said that if $S$ is normal, then it
  can be obtained as in Chapter~5 of \cite{BLR}. 
\item In \cite{Art} the birational group law is
  ``strict''. Proposition~5.2/2 of~\cite{BLR} and \cite{Wei2} reduce,
  under certain conditions, the case of a birational group law to a
  strict one.
\end{enumerate}

Let us now briefly  describe what we say on the application to N\'eron models.
While N\'eron's original paper was written in the old language of Weil's
Foundations and quite hard to read, the book~\cite{BLR} is a modern
treatment that provides all details and more on this topic. It is
however quite demanding for someone who wishes to have a quick overview
of the construction. In this text, we tried to show to the reader that
it is in fact quite simple to see not only the skeleton but also almost
all the flesh of the complete construction. Thus we bring out the
main ideas of N\'eron to produce a model of the abelian variety one
started with, endowed with a strict birational group law. Then
Weil's extension theorem finishes the job. The few things that we do not
prove are:
\begin{enumerate}
\item the decreasing of N\'eron's measure for the defect of smoothness
under blow-up of suitable singular strata (Lemma~\ref{delta_decreases}),
\item the theorem of Weil on the extension of morphisms from smooth
schemes to smooth separated group schemes (proof of
Proposition~\ref{prop_univ_property}).
\end{enumerate}
In both cases, using these results as black boxes does not interrupt
the main line of the proof, and moreover there was nothing we could add
to the proofs of these facts in~\cite{BLR}.

The exposition of Weil's theorem occupies sections 2 and 3 of the paper,
while the application to N\'eron models occupies sections 4 to 6.

\section{A case treated by Andr\'e Weil}

Let $k$ be an algebraically closed field. An {\em algebraic variety} over
$k$ will mean a $k$-scheme that is locally of finite type, separated,
and reduced. For such an $X$, we denote $X(k)$ by $X$ itself, that
is, we forget about the non-closed points.
A subvariety of $X$ is said to be {\em dense} if it is topologically
dense.

Let, in this paragraph, $G$ be an algebraic variety over $k$ with an
algebraic group structure. Then the graph of the multiplication map
from $G\times G$ to $G$ is a closed subvariety $\Gamma$ of $G\times
G\times G$; it is the set of $(a,b,c)$ in $G\times G\times G$ such
that $c=ab$. For every $i$ and $j$ in $\{1,2,3\}$ with $i<j$ the
projection $\pr_{i,j}\colon \Gamma\to G\times G$ is an isomorphism,
hence $\Gamma$ is the graph of a morphism $f_{i,j}:=\pr_k\circ
\pr_{i,j}^{-1}$, where $\{i,j,k\}=\{1,2,3\}$, from $G\times G$ to
$G$. We have $f_{1,2}(a,b)=ab$, $f_{1,3}(a,c)=a^{-1}c$ and
$f_{2,3}(b,c)=cb^{-1}$. For $X$ a dense open subvariety of $G$ and $W$
a dense open subvariety of $\Gamma$ contained in $X\times X\times X$,
the pair $(X,W)$ is a strict birational group law as in the following
definition. Theorem~\ref{thm_Weil} shows in fact that each strict
birational group law is in fact obtained in this way.

\begin{defi}\label{def_2.1}
Let $X$ be an algebraic variety over $k$, not empty. A \emph{strict
  birational group law on $X$} is a subvariety (locally closed, by
definition) $W$ of $X\times X\times X$, that satisfies the following
conditions.
\begin{enumerate}
\item For every $i$ and $j$ in $\{1,2,3\}$ with $i<j$ the projection
  $\pr_{i,j}\colon W\to X\times X$ is an open immersion whose image,
  denoted $U_{i,j}$, is dense in $X\times X$. For each such $(i,j)$,
  we let $f_{i,j}\colon U_{i,j}\to X$ be the morphism such that $W$ is
  its graph. For every such $(i,j)$ and for every $x=(x_1,x_2,x_3)$ in
  $X^3$ the condition $x\in W$ is equivalent to: $(x_i,x_j)\in
  U_{i,j}$ and $x_k=f_{i,j}(x_i,x_j)$, with $\{i,j,k\}=\{1,2,3\}$. We
  denote the morphism $f_{1,2}\colon U_{1,2}\to X$ by $(a,b)\mapsto
  ab$. Hence, for $(a,b,c)$ in $X^3$ we have $(a,b,c)\in W$ if and
  only if $(a,b)\in U_{1,2}$ and $c=ab$.
\item For every $a$ in $X$, and for every $i$ and $j$ in $\{1,2,3\}$
  with $i<j$ the inverse images of $U_{i,j}$ under the morphisms
  $(a,\id_X)$ and $(\id_X,a)\colon X\to X\times X$ are dense
  in~$X$ (in other words, $U_{i,j}\cap (\{a\}\times X)$ is dense in
  $\{a\}\times X$ and $U_{i,j}\cap (X\times \{a\})$ is dense in
  $X\times \{a\}$). 
\item For all $(a,b,c)\in X^3$ such that $(a,b)$, $(b,c)$, $(ab,c)$
  and $(a,bc)$ are in $U_{1,2}$, we have $a(bc)=(ab)c$.
\end{enumerate}
\end{defi}

From now on, $X$ is an algebraic variety over $k$ with a strict
rational group law~$W$. The idea in what follows is that we can let
$X$ act on itself by left and right translations, which are rational
maps. Left translations commute with right translations, and the group
we want to construct can be obtained as the group of birational maps
from $X$ to $X$ that is generated by the left translations, or,
equivalently, the group of birational maps from $X$ to $X$ that
commute with the right translations.

\begin{defi}
We let $\calR$ be the set of birational maps from $X$ to itself, that
is, the set of equivalence classes of $(U,f,V)$, where $U$ and $V$ are
open and dense in $X$ and $f\colon U\to V$ is an isomorphism, where
$(U,f,V)$ is equivalent to $(U',f',V')$ if and only if $f$ and $f'$
are equal on $U\cap U'$ (note that $X$ is separated, this is needed
for transitivity of the relation). For each element $g$ of $\calR$
there is a maximal dense open subset $\Dom(g)$ of $X$ on which it is a
morphism.
\end{defi}

\begin{rema}
The elements of $\calR$ can be composed, they have inverses, and so
$\calR$ is a group. For $(U,f,V)$ as above, let $g$ be $f^{-1}\colon V\to
U$, then $f$ and $g$ induce inverse morphisms between $f^{-1}\Dom(g)$
and $g^{-1}\Dom(f)$, and therefore $(f^{-1}\Dom(g),f,g^{-1}\Dom(f))$
is a maximal representative of the equivalence class of $(U,f,V)$ (see
the proof of Lemma~\ref{lem_3.6} for details).
\end{rema}
\begin{lemm}
For $a$ in $X$, let $U_a:=(a,\id_X)^{-1}U_{1,2}$ and
$V_a:=(a,\id_X)^{-1}U_{1,3}$. Then $U_a$ and $V_a$ are open and dense
in~$X$, and $f_{1,2}\circ (a,\id_X)\colon U_a\to X$, $x\mapsto ax$, and
$f_{1,3}\circ(a,\id_X)\colon V_a\to X$ induce inverse morphisms
between $U_a$ and $V_a$. 
\end{lemm}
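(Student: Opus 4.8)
The plan is to read off both assertions directly from the definition of a strict birational group law: the density is exactly condition~(2) of Definition~\ref{def_2.1}, while the inverse property comes from comparing the two descriptions of membership in $W$ furnished by condition~(1).

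First I would settle openness and density. Since $U_{1,2}$ and $U_{1,3}$ are open in $X\times X$ and $(a,\id_X)\colon X\to X\times X$, $x\mapsto(a,x)$, is a morphism, the sets $U_a=(a,\id_X)^{-1}U_{1,2}$ and $V_a=(a,\id_X)^{-1}U_{1,3}$ are open. Their density is precisely condition~(2) of Definition~\ref{def_2.1} for $(i,j)=(1,2)$ and $(i,j)=(1,3)$, applied to the morphism $(a,\id_X)$.

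For the inverse property, write $L_a:=f_{1,2}\circ(a,\id_X)\colon U_a\to X$, so $L_a(x)=ax$, and $M_a:=f_{1,3}\circ(a,\id_X)\colon V_a\to X$, so $M_a(y)=f_{1,3}(a,y)$. Fix a closed point $x\in U_a$, that is $(a,x)\in U_{1,2}$. By condition~(1) the triple $(a,x,ax)$ lies in $W$, and applying the $(1,3)$-description of $W$ to it yields $(a,ax)\in U_{1,3}$, i.e. $ax\in V_a$, together with $x=f_{1,3}(a,ax)=M_a(ax)$. Hence $L_a$ sends $U_a$ into $V_a$ and $M_a\circ L_a=\id_{U_a}$. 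Symmetrically, for a closed point $y\in V_a$, i.e. $(a,y)\in U_{1,3}$, the triple $(a,f_{1,3}(a,y),y)=(a,M_a(y),y)$ lies in $W$ by the $(1,3)$-description, and the $(1,2)$-description applied to it gives $(a,M_a(y))\in U_{1,2}$, i.e. $M_a(y)\in U_a$, together with $y=f_{1,2}(a,M_a(y))=L_a(M_a(y))$. Hence $M_a$ sends $V_a$ into $U_a$ and $L_a\circ M_a=\id_{V_a}$, so $L_a\colon U_a\to V_a$ and $M_a\colon V_a\to U_a$ are mutually inverse isomorphisms.

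The only point requiring care is the passage from closed points to morphisms, which is legitimate because $X$ is reduced and separated over the algebraically closed field $k$. To promote the set-theoretic inclusion $L_a(U_a)\subseteq V_a$ to an honest factorization $L_a\colon U_a\to V_a$, I would note that $L_a^{-1}(V_a)$ is open and contains every closed point of $U_a$, and that a nonempty closed subset of a variety contains a closed point, so its complement in $U_a$ is empty. The identities $M_a\circ L_a=\id$ and $L_a\circ M_a=\id$ then follow since both sides are morphisms into the separated scheme $X$ that agree on the dense set of closed points of the reduced schemes $U_a$ and $V_a$. This bookkeeping, rather than any genuine difficulty, is the main thing to watch.
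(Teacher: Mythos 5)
Your proof is correct and follows essentially the same route as the paper's: openness is immediate, density is condition~(2) of Definition~\ref{def_2.1}, and the inverse property is obtained by playing the $(1,2)$- and $(1,3)$-descriptions of membership in $W$ against each other. The extra bookkeeping about closed points versus morphisms is harmless but not needed here, since in this section the paper works throughout with $X=X(k)$ for reduced separated varieties over an algebraically closed field.
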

\begin{proof}
Let $a\in X$. For $b$ and $c$ in $X$, the condition $(a,b,c)\in W$ is
equivalent to ($(a,b)\in U_{1,2}$ and $c=f_{1,2}(a,b)$), and to
($(a,c)\in U_{1,3}$ and $b=f_{1,3}(a,c)$). But $(a,b)\in U_{1,2}$
means, by definition, that $b\in U_a$. And $(a,c)\in U_{1,3}$ means
that $c\in V_a$.  
\end{proof}
\begin{defi}
For $a$ in $X$, we let $\phi(a)$ denote the element of $\calR$ given
by $(U_a,f_{1,2}\circ (a,\id_X),V_a)$. Hence: $\phi(a)\colon U_a\to
V_a$ is the isomorphism $x\mapsto ax$. We have $\phi\colon X\to
\calR$, a map of sets, from $X$ to the group~$\calR$. We let $G$ be
the subgroup of $\calR$ generated by~$\phi(X)$. For $a$ in $X$, let
$\psi(a)$ be the element of $\calR$ given by $(U_a',f_{1,2}\circ
(\id_X,a),V_a')$, where $U_a'=(\id_X,a)^{-1}U_{1,2}$ and
$V_a'=(a,\id_X)^{-1}U_{2,3}$. Then $\psi(a)$ is the isomorphism
$x\mapsto xa$ from $U_a'$ to~$V_a'$. 
\end{defi}

\begin{lemm}\label{lem_phi_homomorphism}
For all $(a,b)\in U_{1,2}$ we have $\phi(a)\circ \phi(b)=\phi(ab)$.
For all $a$ and $b$ in $X$, we have $\phi(a)\circ\psi(b) =
\psi(b)\circ\phi(a)$. Every $g$ in $G$ commutes with every $\psi(b)$
($b\in X$).
\end{lemm}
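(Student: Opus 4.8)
The plan is to prove the three assertions of Lemma~\ref{lem_phi_homomorphism} in order, since the first two are concrete birational-map identities and the third is a formal consequence of the second together with the fact that $G$ is \emph{generated} by $\phi(X)$. Throughout I would work with actual representatives $(U,f,V)$ of elements of $\calR$ and check equality of birational maps on a dense open set, using repeatedly the characterization of $W$ from condition~(1) of Definition~\ref{def_2.1}: a triple $(x_1,x_2,x_3)$ lies in $W$ if and only if $(x_i,x_j)\in U_{i,j}$ and $x_k=f_{i,j}(x_i,x_j)$. The key input making all the identities hold is the associativity condition~(3), and the density conditions~(1) and~(2) are what guarantee that the open sets on which I verify the identities are dense, so that equality on them forces equality in $\calR$.

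For the first identity $\phi(a)\circ\phi(b)=\phi(ab)$ for $(a,b)\in U_{1,2}$, I would compute: $\phi(b)$ sends $x\mapsto bx$ and $\phi(a)$ sends $y\mapsto ay$, so the composite sends $x\mapsto a(bx)$ wherever defined, while $\phi(ab)$ sends $x\mapsto (ab)x$. Associativity~(3) gives $a(bx)=(ab)x$ precisely when $(a,b)$, $(b,x)$, $(ab,x)$ and $(a,bx)$ all lie in $U_{1,2}$; since $(a,b)\in U_{1,2}$ is given, the remaining three conditions cut out a dense open subset of $X$ by the density hypotheses in~(1) and~(2). Hence the two birational maps agree on a dense open set and so are equal in $\calR$. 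The same style of argument, using that left and right translations come from the \emph{same} law $W$ via different projections $\pr_{i,j}$, yields the commutation $\phi(a)\circ\psi(b)=\psi(b)\circ\phi(a)$: both composites send $x$ to $a(xb)=(ax)b$, and this is again an instance of associativity~(3) valid on a dense open set.

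For the third assertion I would argue formally. Since $\psi(b)$ commutes with every $\phi(a)$ (the second identity), and every element $g\in G$ is by definition a finite product $g=\phi(a_1)^{\varepsilon_1}\cdots\phi(a_n)^{\varepsilon_n}$ of generators and their inverses, commutation propagates: if $\psi(b)$ commutes with each $\phi(a_i)$ then it commutes with each $\phi(a_i)^{-1}$ (conjugating the relation $\psi(b)\phi(a_i)=\phi(a_i)\psi(b)$ by $\phi(a_i)^{-1}$), and a straightforward induction on $n$ shows $\psi(b)$ commutes with the whole product. Thus $\psi(b)$ commutes with every $g\in G$.

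The main obstacle is not conceptual but bookkeeping: verifying in the first two parts that the locus where the relevant triples simultaneously lie in the appropriate $U_{i,j}$ is genuinely dense. One must intersect several open conditions of the form ``$(\cdot,\cdot)\in U_{i,j}$'' and argue density of the intersection; the delicate point is that density of each separately does not immediately give density of the intersection, so I would invoke condition~(2), which controls the fibrewise density of the $U_{i,j}$ along slices $\{a\}\times X$ and $X\times\{a\}$, to reduce to checking density fibre by fibre and then combine. This is exactly the kind of place where the strictness of the birational group law is used, and it is the step requiring the most care.
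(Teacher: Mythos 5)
Your proof is correct and follows essentially the same route as the paper, which disposes of the first two identities in one line by appeal to associativity (condition (3)) and of the third by noting that $G$ is generated by $\phi(X)$; you have simply filled in the density bookkeeping (which, for finitely many dense opens in a variety, is indeed harmless since finite intersections of dense opens are dense) that the paper leaves implicit. No gaps.
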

\begin{proof}
The first two statements follow from the associativity of the
birational group law. The last statement follows from the
definition of $G$: it is generated by $\{\phi(a): a\in X\}$.
\end{proof}

\begin{lemm}\label{lem_2.7}
Let $g$ be in $G$ and $x\in\Dom(g)$ such that $g(x)=x$. Then $g=\id_X$.
\end{lemm}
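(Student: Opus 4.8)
The plan is to exploit the fact that every $g\in G$ commutes with every right translation $\psi(b)$ (Lemma~\ref{lem_phi_homomorphism}) and to spread the single fixed point $x$ over a dense open locus by letting $b$ vary. The guiding idea is simple: evaluating the identity $g\circ\psi(b)=\psi(b)\circ g$ at $x$ and using $g(x)=x$ should give $g(xb)=xb$; and as $b$ ranges over $X$ the points $xb=\phi(x)(b)$ sweep out the dense open set $V_x$, on which $g$ then coincides with $\id_X$, forcing $g=\id_X$.

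The real content is to make this evaluation at the \emph{single} point $x$ legitimate, since $g\circ\psi(b)=\psi(b)\circ g$ is only an equality of birational maps. First I would recall that $\phi(x)\colon U_x\to V_x$ is an isomorphism of dense open subsets of $X$ (by the lemma computing $U_a$, $V_a$), and introduce the dense open subset $B:=U_x\cap\phi(x)^{-1}(\Dom(g))$ of $X$. For $b\in B$ one has $(x,b)\in U_{1,2}$, so $x\in\Dom(\psi(b))$ with $\psi(b)(x)=xb$; moreover $xb=\phi(x)(b)\in\Dom(g)$ by the choice of $B$, while $x\in\Dom(g)$ by hypothesis. Hence both composites $g\circ\psi(b)$ and $\psi(b)\circ g$ are defined (are morphisms) at $x$, so $x$ lies in the maximal domain of the common birational map they represent, and the two naive values agree there. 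Reading off these values gives $g(xb)=g(\psi(b)(x))=\psi(b)(g(x))=\psi(b)(x)=xb$.

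Finally I would let $b$ range over $B$. Its image $\phi(x)(B)=V_x\cap\Dom(g)$ is a dense open subset of $X$ contained in $\Dom(g)$, and we have just shown $g$ is the identity on it. Since $X$ is separated, a birational map agreeing with $\id_X$ on a dense open subset is equal to $\id_X$, which is the assertion.

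I expect the only genuine obstacle to be the domain bookkeeping in the middle step: one must isolate a dense open set of admissible parameters $b$ for which the birational commutation relation $g\circ\psi(b)=\psi(b)\circ g$ can actually be evaluated at the fixed point $x$, i.e. for which both composites are defined there. Once $B$ is identified, the density of $V_x$ and the isomorphism property of $\phi(x)$ make the rest formal.
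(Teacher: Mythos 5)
Your proposal is correct and follows exactly the paper's argument: evaluate the commutation relation $g\circ\psi(b)=\psi(b)\circ g$ at the fixed point $x$ for all admissible $b$, obtaining $g(xb)=xb$ on a dense open set of translates. The extra domain bookkeeping you supply (the set $B$ and the verification that both composites are genuinely defined at $x$) is exactly what the paper's terser proof leaves implicit, so the two proofs coincide in substance.
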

\begin{proof}
For every $b\in X$ such that $b\in U_x$ and $xb\in\Dom(g)$, we have
\[
g(xb) = (g\circ\psi(b))x = (\psi(b)\circ g)x = xb.
\]
Hence $g(y)=y$ for all $y$ in a dense open subset of~$X$.
\end{proof}

\begin{lemm}
The map $\phi\colon X\to G$ is injective.
\end{lemm}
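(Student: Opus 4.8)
The plan is to read off the point $a$ from the birational translation $\phi(a)$ by evaluating it at a single generic point and then inverting through the third projection. Concretely, suppose $\phi(a)=\phi(b)$ as elements of $\calR$; I want to conclude $a=b$.

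Since $\phi(a)$ and $\phi(b)$ are represented by $(U_a,\,x\mapsto ax,\,V_a)$ and $(U_b,\,x\mapsto bx,\,V_b)$, equality in $\calR$ means precisely that these two morphisms agree on $U_a\cap U_b$. By the lemma above, $U_a$ and $U_b$ are dense open in $X$, so $U_a\cap U_b$ is again dense open, in particular nonempty (recall $X\neq\varnothing$). I would then pick any point $x\in U_a\cap U_b$ and set $c:=ax=bx$. By definition of the $U_a$ and of $f_{1,2}$ we have $(a,x)\in U_{1,2}$ with $c=f_{1,2}(a,x)$, so $(a,x,c)\in W$; and likewise $(b,x,c)\in W$.

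Now I invoke condition (1) of Definition~\ref{def_2.1} for the pair $(i,j)=(2,3)$: a triple $(x_1,x_2,x_3)$ lies in $W$ if and only if $(x_2,x_3)\in U_{2,3}$ and $x_1=f_{2,3}(x_2,x_3)$. Applying this to $(a,x,c)\in W$ and to $(b,x,c)\in W$ yields $a=f_{2,3}(x,c)$ and $b=f_{2,3}(x,c)$. Since $f_{2,3}$ is an honest morphism, hence single-valued, the two right-hand sides coincide, whence $a=b$.

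The argument is short, and there is no serious obstacle; the only point demanding care is that equality in $\calR$ is equality of \emph{birational} maps, so one cannot reason globally but must first descend to a common point of the two domains, which is where the density statements of the lemma above (together with $X\neq\varnothing$) are used. The cancellation that does the real work is hidden in condition (1): the hypothesis that $\pr_{2,3}\colon W\to X\times X$ is an open immersion is exactly what makes $f_{2,3}$ single-valued, and it is this single-valuedness that forces $a=b$. Note in particular that associativity, condition (3), is not needed for this statement.
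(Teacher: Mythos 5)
Your proof is correct and is essentially the paper's own argument: both pick a point $x$ in the dense (hence nonempty) intersection $U_a\cap U_b$, observe that $(a,x,ax)$ and $(b,x,bx)$ are points of $W$ with the same image under $\pr_{2,3}$, and conclude $a=b$ from the injectivity of $\pr_{2,3}$ (which you phrase equivalently as the single-valuedness of $f_{2,3}$). No gap.
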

\begin{proof}
Let $a$ and $b$ be in $X$, such that $\phi(a) = \phi(b)$. Then, for
all $x\in U_a\cap U_b$, we have $(a,x,ax)\in W$ and $(b,x,bx)\in
W$. But these two points of $W$ have the same image under $\pr_{2,3}$,
as $ax=bx$. As $\pr_{2,3}$ is injective, $a=b$.
\end{proof}
In order to make $G$ into a group variety, the idea is now simply to
use translates of $\phi\colon X\to G$ as charts. 
\begin{defi}
For $g$ in $G$, let $\phi_g\colon X\to G$ be given by $a\mapsto
g\circ\phi(a)$. Note that $\phi_g$ is $\phi\colon X\to G$ followed by
left-multiplication by $g$ on~$G$.
\end{defi}
The $\phi_g$ cover $G$ because $X$ is not empty. The next lemma shows
that these charts are compatible: for $g_1$ and $g_2$ in $G$,
$\phi_{g_2}^{-1}(\phi_{g_1}(X))$ is open in $X$, and the map
$\phi_{g_1}^{-1}\circ\phi_{g_2}\colon\phi_{g_2}^{-1}\phi_{g_1}(X)\to
X$ sending $x$ to $\phi_{g_1}^{-1}(\phi_{g_2}(x))$, is a morphism. 

\begin{lemm}\label{lem_charts_compatible}
For $g_1$ and $g_2$ in $G$, the set of $(a,b)$ in $X^2$ such that
$\phi_{g_1}(a)=\phi_{g_2}(b)$ is the transpose of the graph of
$g_1^{-1}g_2\colon\Dom(g_1^{-1}g_2)\to X$, $b\mapsto (g_1^{-1}g_2)(b)$,
that is, the set $\{(a,b)\in X^2:\text{$b\in\Dom(g_1^{-1}g_2)$ and
  $a=(g_1^{-1}g_2)b$} \}$.
\end{lemm}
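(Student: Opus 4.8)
The plan is to translate the defining condition into the group $\calR$ and then to exploit the one property that distinguishes $G$, namely that every element of $G$ commutes with all right translations $\psi(c)$ (Lemma~\ref{lem_phi_homomorphism}). Writing $h:=g_1^{-1}g_2\in G$, the equality $\phi_{g_1}(a)=\phi_{g_2}(b)$ in $G\subseteq\calR$ is equivalent, after composing with $g_1^{-1}$ on the left, to $\phi(a)=h\circ\phi(b)$. Since $X$ is reduced and separated, two elements of $\calR$ coincide as soon as they agree on a dense open subset, so this equality is in turn equivalent to the relation $ac=h(bc)$ holding for all $c$ in some dense open subset of $X$. The whole proof then consists in extracting from this generic relation the pointwise statement ``$b\in\Dom(h)$ and $a=h(b)$'', and conversely.

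For the inclusion $\{(a,b):b\in\Dom(h),\ a=h(b)\}\subseteq\{(a,b):\phi_{g_1}(a)=\phi_{g_2}(b)\}$ I would fix $b\in\Dom(h)$ and $a=h(b)$ and choose $c$ in the intersection of the finitely many dense open conditions ``$bc$ is defined'', ``$bc\in\Dom(h)$'' and ``$ac$ is defined''; density of each is guaranteed by Definition~\ref{def_2.1}(2) together with the dominance of the morphism $x\mapsto bx$. For such $c$ the point $b$ lies in the domain of both representatives $h\circ\psi(c)$ and $\psi(c)\circ h$, which are equal in $\calR$, whence $h(bc)=h(b)\,c=ac$. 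Thus $h\circ\phi(b)$ and $\phi(a)$ agree on a dense open set and are therefore equal, giving $\phi_{g_1}(a)=\phi_{g_2}(b)$.

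For the reverse inclusion I would start from $\phi(a)=h\circ\phi(b)$, i.e. from $ac=h(bc)$ on a dense open set, and rewrite the commutation as $h=\psi(c)^{-1}\circ h\circ\psi(c)$ in $\calR$. Choosing $c$ generic enough that $bc$ is defined, $bc\in\Dom(h)$, $ac$ is defined and $ac=h(bc)$, the point $b$ lies in the domain of the representative $\psi(c)^{-1}\circ h\circ\psi(c)$, since $\psi(c)(b)=bc\in\Dom(h)$ and $h(bc)=ac=\psi(c)(a)$ lies in the image of $\psi(c)$; evaluating gives $\psi(c)^{-1}(h(bc))=\psi(c)^{-1}(ac)=a$. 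As this representative is equal to $h$ in $\calR$, its maximal domain is $\Dom(h)$, so $b\in\Dom(h)$ and $h(b)=a$ follow at once. Finally one recognises the set $\{(a,b):b\in\Dom(h),\ a=h(b)\}$ as the transpose of the graph of $g_1^{-1}g_2$.

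The part I expect to be delicate is not any single computation but the consistent handling of domains of birational maps: one must repeatedly pass between a convenient representative of an element of $\calR$ and its maximal representative $\Dom$, using that membership of a point in the domain of any representative forces membership in $\Dom$, and that equal elements of $\calR$ share the same $\Dom$ and agree on it (this is where reducedness of $X$ and separatedness enter). The other routine but indispensable point is checking that the finitely many genericity conditions imposed on $c$ cut out a dense open subset, which rests on Definition~\ref{def_2.1}(2) and on the dominance of the translation morphisms.
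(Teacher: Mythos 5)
Your proof is correct and follows essentially the same route as the paper's: both directions rest on the commutation $h\circ\psi(c)=\psi(c)\circ h$ from Lemma~\ref{lem_phi_homomorphism}, a choice of $c$ in the intersection of the finitely many dense open conditions supplied by Definition~\ref{def_2.1}(2), and the passage from the composite representative $\psi(c)^{-1}\circ h\circ\psi(c)$ to the maximal domain $\Dom(h)$. The only cosmetic difference is that the paper closes the easy direction by invoking Lemma~\ref{lem_2.7}, whereas you observe directly that $\phi(a)$ and $h\circ\phi(b)$ agree on a dense open set and hence coincide in $\calR$.
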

\begin{proof}
Let $g:=g_1^{-1}g_2$. We want to know for which $(a,b)\in X^2$ the
condition $\phi(a)=g\circ\phi(b)$ holds. Let $(a,b)$ be in $X^2$. 

Assume that $\phi(a)=g\circ\phi(b)$. Then there is an $x$
in $X$ such that $(a,x)\in U_{1,2}$, $(b,x)\in U_{1,2}$, $(b,x)\in
f_{1,2}^{-1}\Dom(g)$, and $ax=g(bx)$. Let $x$ be such. 

Then $a\in\Dom(\psi(x))$ and $(\psi(x))(a)=ax$, $b\in\Dom(\psi(x))$
and $(\psi(x))(b)=bx$, and $(\psi(x))(b)=bx\in\Dom(g)$, 
and $(\psi(x))(a)=g((\psi(x))(b))$, and
$(\psi(x))(a)=ax\in\Dom(\psi(x)^{-1})$. 

Hence $b\in\Dom(\psi(x))$, $(\psi(x))(b)\in\Dom(g)$,
$g((\psi(x))(b))\in\Dom(\psi(x)^{-1})$, and 
$a=(\psi(x)^{-1}\circ g\circ\psi(x))(b)$. 
Now note that $\psi(x)^{-1}\circ g\circ\psi(x)=g$ in $\calR$, hence 
$b\in\Dom(g)$ and $a=g(b)$.

Now assume that $b\in\Dom(g)$ and $a=g(b)$. We must prove that
$\phi(a)=g\circ\phi(b)$ in~$G$. 

Let $x$ be in $X$ such that $(a,x)\in U_{1,2}$, $(b,x)\in U_{1,2}$,
and $bx\in\Dom(g)$.  Then $(\phi(a))(x)=ax$ because $(a,x)\in
U_{1,2}$. We have $ax=g(b)x$, because $a=g(b)$. We have
$(g(b)x)=\psi(x)(g(b))$ because $(g(b),x)\in U_{1,2}$. We have
$\psi(x)(g(b))=(\psi(x)\circ g)(b)$ because $b\in\Dom(g)$, $g(b)=a$
and $(a,x)\in U_{1,2}$. We have $\psi(x)\circ g=g\circ\psi(x)$ in
$\calR$, hence $(\psi(x)\circ g)(b)=(g\circ\psi(x))(b)$. We have
$(g\circ\psi(x))(b)=g((\psi(x))(b))$ because $b\in\Dom(\psi(x))$ and
$(\psi(x))(b)\in\Dom(g)$. We have
$g((\psi(x))(b))=g(bx)=g((\phi(b))(x))=(g\circ\phi(b))(x)$. We
conclude, using Lemma~\ref{lem_2.7}, that $\phi(a)=g\circ\phi(b)$
in~$G$. 
\end{proof}

\begin{theo}\label{thm_Weil}
Let $k$ be an algebraically closed field. Let $X$ be an algebraic
variety over $k$, that is, the variety of $k$-points of a $k$-scheme
that is locally of finite type, separated and reduced. Let~$W$ be a
strict birational group law on~$X$. Let $G$ be the group constructed
as above.
\begin{enumerate}
\item
The charts $\phi_g\colon X\to G$ are compatible, and each of
  them is an open immersion with dense image. They give $G$ the
  structure of a $k$-scheme that is reduced, and locally of finite
  type. 
\item
The group law on $G$ extends the birational group law on $X$
  that is given by~$W$. As a $k$-group scheme, locally of finite type,
  $G$ is separated.
\item
The map $X\times X\to G$, $(a,b)\mapsto \phi(a)\phi(b)^{-1}$ is
surjective, and the fibre over $g$ in $G$ is the set
$\{(a,b)\in X^2: \text{$b\in\Dom(g)$ and  $a=g(b)$}\}$.
\item Every $g$ in $G$ induces an isomorphism between $\Dom(g)$ and
  $\Dom(g^{-1})$. 
\item If $X$ is of finite type, then $G$ is of finite type.
\end{enumerate}
\end{theo}
\begin{proof}
For 1, apply Lemma~\ref{lem_charts_compatible}.

For 2, use Lemma~\ref{lem_phi_homomorphism}, and that the diagonal
in $G\times G$ is the fibre over the unit element of the morphism
$G\times G\to G$, $(x,y)\mapsto x^{-1}y$.

For 3, use Lemma~\ref{lem_charts_compatible}.

For 4, use 3.

For 5, note that $G$ is locally of finite type, and, as the image of
$X^2$, quasi-compact.
\end{proof}

\begin{rema} \label{rem_extensions}
Let $A,B$ be two commutative connected algebraic groups. Let
$\mathrm{Ext}(A,B)$ be the set of classes of classes of extensions of
$A$ by $B$.  A rational map $f\colon A\times A\longrightarrow B$
satisfying the identity
\[f(y,z)-f(x+y,z)+f(x,y+z)-f(x,y)=0\]
for all $x,y,z\in A$ is called a {\em rational factor system}.
It is called {\em
  symmetric} if $f(x,y)=f(y,x)$, $x,y\in A$.  Such a system will be
called {\em trivial} if there exists a rational map
$g\colon A\longrightarrow B$ such that $f=\delta g$, where
\[
\delta g(x,y)=g(x,y)=g(x+y)-g(x)-g(y).
\]
The classes of symmetric factor
systems modulo the trivial factor systems form a group denoted
$H^2_{\rm rat}(A,B)_s$. As an application of Weil's theorem, one can
show that $H^2_{\rm rat}(A,B)_s$ is isomorphic to the subgroup of
$\mathrm{Ext}(A,B)$ given by the extensions which admit a rational
section (see \cite{Ser},Chap.~VII, \S~1, no~4., Prop.~4).
\end{rema}

\section{The case treated by Michael Artin in SGA3}

In order to state and prove a relative version of the extension
theorem for birational group laws, a short reminder on $S$-rational
maps is useful. Definitions~\ref{defs_3.1} and
Proposition~\ref{prop_3.2} below can be found, some parts in a broader
generality, in \cite{Art}, Section~1, except for the statements where
other references are given.

\begin{defi}\label{defs_3.1}
Let $S$ be a scheme. Let $X$, $Y$ be $S$-schemes. 
\begin{enumerate}
\item An open subscheme $U\subset X$ is {\em $S$-dense} if $U\times_S
  S'$ is schematically dense in $X\times_S S'$ for all morphisms
  $S'\to S$.
\item An {\em $S$-rational map} $f\colon X\dashrightarrow Y$ is an
  equivalence class of morphisms $U\to Y$ with $U\subset X$ open and
  $S$-dense, where $U\to Y$ and $V\to Y$ are equivalent if they agree
  on an $S$-dense open subscheme $W\subset U\cap V$.
\item An {\em $S$-birational map} is an $S$-rational map that can be
  represented by a morphism $U\to Y$ inducing an isomorphism with an
  $S$-dense open subscheme of~$Y$.
\end{enumerate}
\end{defi}

\begin{prop}\label{prop_3.2}
Let $S$ be a scheme. Let $X$ and $Y$ be $S$-schemes that are flat and
locally of finite presentation. 
\begin{enumerate}
\item By \cite{EGA}~IV.11.10.10, an open subscheme $U$ of $X$ is
  $S$-dense if and only if for all $s\in S$, $U_s$ is schematically
  dense in $X_s$ (that is, $U_s$ contains the associated points of
  $X_s$).
\item Unions of non-empty families and finite intersections of $S$-dense opens are
  $S$-dense open.
\item If $U\subset X$ and $V\subset Y$ are $S$-dense open subschemes,
  then $U\times_S V$ is an $S$-dense open subscheme of~$X\times_S Y$.
\item Let and $f$ and $g$ be $S$-morphisms from $X$ to~$Y$. Assume
  that the fibers of $Y\to S$ are separated, and that $f$ and $g$ are
  equal on an $S$-dense open subscheme $U$ of~$X$. Then $f=g$.
\item Let $f\colon X\dashrightarrow Y$ be an $S$-rational map. Assume
  that the fibres of $Y\to S$ are separated. Then there is a maximal
  $S$-dense open subscheme $U\subset X$ with a morphism $U\to Y$
  representing~$f$, called the {\em domain of definition} of~$f$ and
  denoted $\Dom_S(f)$. Its reduced complement is called the {\em
    exceptional locus} of~$f$. For $S'\rightarrow S$ flat and locally
  of finite presentation, $\Dom_{S'}(f\times_S S')=\Dom_S (f)\times_S S'$.
\end{enumerate}
\end{prop}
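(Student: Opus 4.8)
The plan is to treat part~(1) as a citation and to bootstrap parts~(2)--(5) from the fibrewise criterion it provides. Part~(1) is exactly \cite{EGA}~IV.11.10.10 together with the fact that an open subscheme of a scheme locally of finite type over a field is schematically dense precisely when it contains all the associated points; under flatness and local finite presentation this reduces $S$-density to a fibrewise condition stable under base change, and I would simply invoke it.

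Granting~(1), parts~(2) and~(3) become fibrewise assertions over fields. For~(2), the fibre over $s$ of a union (\resp a finite intersection) of $S$-dense opens is the union (\resp intersection) of the fibres; every associated point of $X_s$ lies in each schematically dense $U_{i,s}$, hence in a nonempty union and in any finite intersection, finiteness being used only to keep the intersection open. For~(3), $X\times_S Y$ is again flat and locally of finite presentation over $S$ with fibre $X_s\times_{\kappa(s)}Y_s$, so by~(1) it suffices to show $U_s\times_{\kappa(s)}V_s$ is schematically dense there. Dropping the subscript, I would factor $U\times_k V\hookrightarrow U\times_k Y\hookrightarrow X\times_k Y$: the first inclusion is the base change of the schematically dense $V\hookrightarrow Y$ along the flat projection $U\times_k Y\to Y$, the second the base change of $U\hookrightarrow X$ along the flat projection $X\times_k Y\to X$ (both projections flat because every scheme over a field is flat), and schematic density is preserved by flat base change and is transitive for open immersions.

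For part~(4), I would form the equalizer $E=(f,g)^{-1}(\Delta_Y)$, where $\Delta_Y\colon Y\to Y\times_S Y$ is the relative diagonal; this is a locally closed immersion, so $E\subset X$ is locally closed, and $f=g$ on $U$ gives a factorization $U\hookrightarrow E$. Fibrewise, separatedness of $Y_s$ makes $\Delta_{Y_s}$ a closed immersion, so $E_s$ is closed in $X_s$; as it contains the dense (by~(1)) subset $U_s$, we get $E_s=X_s$ for every $s$, whence $|E|=|X|$. A surjective locally closed immersion is a closed immersion, so $E$ is closed in $X$. Finally, taking $S'=S$ in the definition shows $U$ is schematically dense in $X$, so the only closed subscheme of $X$ through which $U$ factors is $X$ itself; therefore $E=X$ and $f=g$.

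For part~(5), existence of $\Dom_S(f)$ follows by gluing: any two representatives $(U_i,f_i)$, $(U_j,f_j)$ agree on an $S$-dense open of $U_i\cap U_j$, which is $S$-dense in $U_i\cap U_j$, so by~(4) (the fibres of $Y$ being separated) $f_i=f_j$ on all of $U_i\cap U_j$; the $f_i$ thus glue to a morphism on the union $U$ of all representative domains, which is $S$-dense by~(2) and maximal by construction. For the base-change formula, $S$-density is stable under arbitrary base change straight from the definition, so $\Dom_S(f)\times_S S'$ is an $S'$-dense open carrying a representative of $f\times_S S'$, giving $\Dom_S(f)\times_S S'\subseteq\Dom_{S'}(f\times_S S')$. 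The reverse inclusion is the crux: it says the domain cannot grow under flat base change. Since a flat locally finitely presented morphism is open, I may shrink $S$ to the image of $S'$ and assume $S'\to S$ faithfully flat and locally of finite presentation, so that $X_{S'}\to X$ is an fppf cover; the assertion then becomes that ``$f$ extends near $x$'' is an fppf-local condition on $S$, a representative of $f\times_S S'$ near the fibre over $x$ having two pullbacks that agree by the uniqueness in~(4), whence fppf descent of morphisms produces a representative of $f$ near $x$. Making this descent precise---equivalently, identifying $\Dom_S(f)$ with the locus where the projection from the scheme-theoretic closure of the graph is an isomorphism and invoking the compatibility of scheme-theoretic closure with flat base change (\cite{EGA}~IV.11.10)---is the one genuine technical point; everything else is formal from~(1)--(4).
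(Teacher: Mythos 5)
The paper does not actually prove Proposition~\ref{prop_3.2}: it states just before Definition~\ref{defs_3.1} that these facts ``can be found, some parts in a broader generality, in \cite{Art}, Section~1,'' with part~(1) referred to \cite{EGA}~IV.11.10.10. So there is no in-paper proof to compare against; what you have written is essentially a reconstruction of the standard arguments from those references, and it is sound. Your reductions of (2) and (3) to associated points of the fibres, the equalizer argument for (4) (immersion, surjective on points by fibrewise density and separatedness, hence a closed immersion, hence all of $X$ by schematic density), and the gluing argument for existence in (5) are all correct and are the proofs one finds in \cite{Art} and in Section~2.5 of \cite{BLR}.

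The one place to be careful is the reverse inclusion $\Dom_{S'}(f\times_S S')\subseteq\Dom_S(f)\times_S S'$ in (5), and you correctly single it out. Note, though, that the descent argument as you phrase it is circular: to descend a representative defined on $D'=\Dom_{S'}(f_{S'})$ along the fppf cover $X_{S'}\to X$ you need $D'$ to be stable under the two projections from $X_{S''}$, i.e.\ $\pr_1^{-1}D'=\pr_2^{-1}D'$, and that equality is precisely an instance of the base-change formula you are trying to prove (applied to the flat maps $S''\to S'$). The non-circular route is the second one you mention: identify $\Dom_S(f)$ with the open locus over which the first projection from the schematic closure of the graph of a representative is an isomorphism, and use that schematic closure commutes with flat base change (\cite{EGA}~IV.11.10.5); this is how \cite{Art} and \cite{BLR}~2.5/5 proceed, and with that substitution your proposal is complete. (A minor point there: to know that the graph of $U\to Y$ is closed in $U\times_S Y$ one uses separatedness of the fibres of $Y\to S$ again, working fibrewise or locally.)
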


From now on and until the rest of this section, we put ourselves in
the situation of Theorem~3.7 of~\cite{Art}, namely we make the
following:

\begin{enonce}{Assumptions}
Let $S$ be a scheme, and $f\colon X\to S$ be an $S$-scheme that is
faithfully flat and locally of finite presentation, whose fibres are
separated and have no embedded components (condition~($\diamond$) in
\cite[3.0]{Art}). Note that it is equivalent to require these
conditions for the {\em geometric} fibres, see~\cite{EGA}~IV.4.2.7.
\end{enonce}

It is clear that for any $T\to S$, an open
subset $U$ of $X_T$ is $T$-dense if and only if for all $t$ in $T$ its
fibre $U_t$ is topologically dense in~$X_t$ (the ``no embedded
components'' condition is used here).

We can now generalise Definition~\ref{def_2.1} to the present
situation. Unlike Artin, we do not assume the immersion $W\to
X\times_S X\times_S X$ below to be of finite presentation.
\begin{defi}\label{def_strict-S-rat-grlaw}
A \emph{strict $S$-birational group law on $X/S$} is a (locally closed)
subscheme 
$W$ of $X\times_S X\times_S X$ that
satisfies the following conditions.
\begin{enumerate}
\item For every $i$ and $j$ in $\{1,2,3\}$ with $i<j$ the projection
  $\pr_{i,j}\colon W\to X\times_S X$ is an open immersion whose image,
  denoted $U_{i,j}$, is $S$-dense in $X\times_S X$ (condition~$(*)$ in
  \cite[3.0]{Art}). For each such $(i,j)$, we let $f_{i,j}\colon
  U_{i,j}\to X$ be the $S$-morphism such that $W$ is its graph. For
  every such $(i,j)$, for every $T\to S$ and for every
  $x=(x_1,x_2,x_3)$ in $X(T)^3$ the condition $x\in W(T)$ is
  equivalent to: $(x_i,x_j)\in U_{i,j}(T)$ and $x_k=f_{i,j}(x_i,x_j)$
  in $X(T)$, with $\{i,j,k\}=\{1,2,3\}$. We denote the $S$-morphism
  $f_{1,2}\colon U_{1,2}\to X$ by $(a,b)\mapsto ab$. Hence, for
  $(a,b,c)$ in $X(T)^3$ we have $(a,b,c)\in W(T)$ if and only if $(a,b)\in
  U_{1,2}(T)$ and $c=ab$ in $X(T)$.
\item For every $T\to S$, for every $a$ in $X(T)$, and for every $i$
  and $j$ in $\{1,2,3\}$ with $i<j$ the inverse images of
  $(U_{i,j})_T$ under the morphisms $(a,\id_{X_T})$ and
  $(\id_{X_T},a)\colon X_T\to (X\times_S X)_T$ are $T$-dense in~$X_T$.
\item For every $T\to S$ and all $(a,b,c)\in X(T)^3$ such that
  $(a,b)$, $(b,c)$, $(ab,c)$ and $(a,bc)$ are in $U_{1,2}(T)$, we have
  $a(bc)=(ab)c$ in~$X(T)$.
\end{enumerate}
\end{defi}

Let $W$ be a strict birational group law on~$X/S$. Then $(X,W)$ is a
\emph{group germ over $S$} as in Definition~3.1 of~\cite{Art}.

\begin{defi}
For $T\to S$, let $\calR(T)$ be the set of $T$-rational maps from
$X_T$ to itself that have a representative $(U,f)$ with $U\subset X_T$
open and $T$-dense, and $f$ an isomorphism from $U$ to an open
$T$-dense open subset $V$ of~$X_T$. As in Section~2.5 of \cite{BLR},
we call elements of $\calR(T)$ \emph{$T$-birational maps} from $X_T$
to itself.
\end{defi}

The next lemma says that every $f$ in $\calR(T)$ has a unique maximal
representative.

\begin{lemm}\label{lem_3.6}
Let $T\to S$, $U$ and $V$ be open and $T$-dense in $X_T$, and $f\colon
U\to V$ a $T$-isomorphism, and let $g\colon V\to U$ be its
inverse. Let $\Dom(f)$ and $\Dom(g)$ be their domains of
definition. Then we have $f\colon\Dom(f)\to X_T$ and
$g\colon\Dom(g)\to X_T$, and $f$ and $g$ induce inverse morphisms
between $f^{-1}\Dom(g)$ and $g^{-1}\Dom(f)$, and $U\subset
f^{-1}\Dom(g)$ and $V\subset g^{-1}\Dom(f)$.
\end{lemm}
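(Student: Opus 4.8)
The plan is to produce the maximal representatives from Proposition~\ref{prop_3.2}(5), and then to propagate the identity $g\circ f=\id$ from the original open $U$ to the whole overlap, using the uniqueness clause Proposition~\ref{prop_3.2}(4). First I would invoke Proposition~\ref{prop_3.2}(5): since $X_T/T$ is flat and locally of finite presentation (being a base change of $X/S$) with separated fibres, the $T$-rational maps represented by $f\colon U\to X_T$ and $g\colon V\to X_T$ each have a maximal domain of definition, giving morphisms $f\colon\Dom(f)\to X_T$ and $g\colon\Dom(g)\to X_T$ with $U\subset\Dom(f)$ and $V\subset\Dom(g)$. Because $f$ restricts to an isomorphism $U\to V$ with $f(U)=V\subset\Dom(g)$, we immediately get $U\subset f^{-1}\Dom(g)$, and symmetrically $V\subset g^{-1}\Dom(f)$, which is the last assertion of the lemma.

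The heart of the argument is to show $g\circ f=\id$ on the open subscheme $f^{-1}\Dom(g)=\{x\in\Dom(f):f(x)\in\Dom(g)\}$. Both $g\circ f$ and $\id_{X_T}$ are defined there, and since $f$ and $g$ are genuine inverse isomorphisms between $U$ and $V$, they agree on $U$. I would next check that $U$ is $T$-dense not merely in $X_T$ but already in the intermediate open $f^{-1}\Dom(g)$: this follows from Definition~\ref{defs_3.1}(1), because schematic density of $U\times_T T'$ in $X_T\times_T T'$ restricts to schematic density in any intermediate open for every $T'\to T$, or equivalently from the fibrewise criterion in Proposition~\ref{prop_3.2}(1) (the open $f^{-1}\Dom(g)$ over $T$ inherits the Assumptions, so its fibres are separated without embedded components). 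Applying Proposition~\ref{prop_3.2}(4) to the pair $g\circ f,\ \id\colon f^{-1}\Dom(g)\to X_T$ then yields $g\circ f=\id$ on all of $f^{-1}\Dom(g)$, and the symmetric argument gives $f\circ g=\id$ on $g^{-1}\Dom(f)$.

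It remains to assemble these into the inverse-morphism statement. For $x\in f^{-1}\Dom(g)$, writing $y=f(x)\in\Dom(g)$ we have $g(y)=x\in\Dom(f)$, so $y\in g^{-1}\Dom(f)$; hence $f$ carries $f^{-1}\Dom(g)$ into $g^{-1}\Dom(f)$, and symmetrically $g$ carries $g^{-1}\Dom(f)$ into $f^{-1}\Dom(g)$. Combined with $g\circ f=\id$ and $f\circ g=\id$, this exhibits $f$ and $g$ as mutually inverse isomorphisms between $f^{-1}\Dom(g)$ and $g^{-1}\Dom(f)$. The only genuinely delicate point I expect is the density bookkeeping, namely confirming that $U$ remains $T$-dense after passing to the smaller open $f^{-1}\Dom(g)$, since once the maximal representatives are in hand everything else reduces to the separatedness/uniqueness statement of Proposition~\ref{prop_3.2}.
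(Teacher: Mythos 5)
Your proposal is correct and follows essentially the same route as the paper: maximal domains via Proposition~\ref{prop_3.2}(5), the inclusions $U\subset f^{-1}\Dom(g)$ and $V\subset g^{-1}\Dom(f)$, and then propagating $g\circ f=\id$ from $U$ to $f^{-1}\Dom(g)$ by the uniqueness statement of Proposition~\ref{prop_3.2}(4), which also forces $f$ to carry $f^{-1}\Dom(g)$ into $g^{-1}\Dom(f)$. Your explicit check that $U$ stays $T$-dense in the intermediate open $f^{-1}\Dom(g)$ is a point the paper leaves implicit, but it is the same argument.
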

\begin{proof}
By Proposition~\ref{prop_3.2}, $f$ and $g$ extend uniquely to
$\Dom(f)$ and $\Dom(g)$, respectively. Note that $U\subset \Dom(f)$
and $V\subset \Dom(g)$, hence $U\subset f^{-1}\Dom(g)$ and $V\subset
g^{-1}\Dom(f)$. By definition, we have $g\circ f\colon
f^{-1}\Dom(g)\to X_T$, and on $U$ this is equal to the inclusion
morphism, hence $g\circ f$ \emph{is} the inclusion morphism of
$f^{-1}\Dom(g)$ in~$X_T$. But then $g\circ f$ factors through
$\Dom(f)$, hence $f\colon f^{-1}\Dom(g)\to \Dom(g)$ factors through
$g^{-1}\Dom(f)$. So we have morphisms $f\colon f^{-1}\Dom(g)\to
g^{-1}\Dom(f)$ and $g\colon g^{-1}\Dom(f)\to f^{-1}\Dom(g)$. Then
$g\circ f$ is equal to the identity on $U$, and therefore \emph{is}
the identity, and similarly for $f\circ g$.
\end{proof}

\begin{lemm}\label{lem_G=sheaf}
For all $T\to S$, the elements of $\calR(T)$ can be composed, have a
two-sided inverse, and this makes $T\mapsto \calR(T)$ into a presheaf of
groups on~$S$. It is a sheaf for the fppf topology.
\end{lemm}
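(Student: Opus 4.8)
The plan is to establish the three assertions in order, reserving the sheaf property as the only nonformal point. For the \emph{group structure}, fixing $T\to S$, I would compose $f,h\in\calR(T)$ using their maximal isomorphisms $f\colon\Dom(f)\to\Dom(f^{-1})$ and $h\colon\Dom(h)\to\Dom(h^{-1})$ between $T$-dense opens furnished by Lemma~\ref{lem_3.6}. The open $\Dom(f^{-1})\cap\Dom(h)$ is $T$-dense by Proposition~\ref{prop_3.2}(2), and its preimage $f^{-1}(\Dom(h))$ is $T$-dense in $\Dom(f)$ because $f$ is an isomorphism onto a $T$-dense open and density is fibrewise by Proposition~\ref{prop_3.2}(1); on it, $h\circ f$ is an isomorphism onto a $T$-dense open, so it defines an element of $\calR(T)$. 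Proposition~\ref{prop_3.2}(4) makes this composite independent of the chosen representatives and associative, the class of $\id_{X_T}$ is a unit, and the inverse isomorphism $\Dom(f^{-1})\to\Dom(f)$ provides a two-sided inverse.

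For the \emph{presheaf structure}, given $T'\to T$ over $S$ I would send the class of an isomorphism $f\colon U\to V$ to the class of its base change $f_{T'}\colon U_{T'}\to V_{T'}$; here $U_{T'}$ and $V_{T'}$ are $T'$-dense since $S$-density is stable under arbitrary base change by Definition~\ref{defs_3.1}(1). This respects the equivalence relation and is a group homomorphism because base change commutes with composition and inversion, functorially in $T'$.

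For the \emph{sheaf property} it suffices to descend along a single faithfully flat, locally finitely presented morphism $T'\to T$ (Zariski descent being elementary); set $T'':=T'\times_T T'$ with projections $p_1,p_2$. For separatedness, if $f_{T'}=\id$ for some $f\in\calR(T)$, then on $\Dom(f)$ the morphism $f$ and the inclusion agree after the faithfully flat base change to $\Dom(f)_{T'}$, hence agree by fppf descent of morphisms, so $f=\id$. For gluing, given $f'\in\calR(T')$ with $p_1^*f'=p_2^*f'$, write the maximal representative $f'\colon D'\to R'$ with $D'=\Dom(f')$. By Proposition~\ref{prop_3.2}(5) the domains of definition base change correctly, so the equality of the two maximal $T''$-birational maps forces $p_1^{-1}(D')=p_2^{-1}(D')$ inside $X_{T''}$ with the two pullbacks of $f'$ agreeing there. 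Since open subschemes of $X$ and morphisms to $X$ both satisfy fppf descent, $D'$ descends to a $T$-dense open $D\subset X_T$ (the $T$-density of $D$ following from that of $D'$ by Proposition~\ref{prop_3.2}(1) and faithful flatness) and $f'$ descends to a morphism $f\colon D\to X_T$ with $f_{T'}=f'$.

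The step I expect to be the main obstacle is the conclusion that this descended $f$ actually lies in $\calR(T)$, i.e.\ that it is \emph{birational} and not merely rational. I would settle this by observing that $f_{T'}=f'$ is an open immersion and that being an open immersion is fppf-local on the target: hence $f$ is itself an open immersion, and its image $V$, which satisfies $V_{T'}=R'$, is therefore $T$-dense. Thus $f\in\calR(T)$ restricts to $f'$, and uniqueness comes from separatedness. The essential inputs that make birationality (rather than only rationality) descend are precisely Proposition~\ref{prop_3.2}(5), on the compatibility of domains of definition with base change, together with the fppf-local character of open immersions.
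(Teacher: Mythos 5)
Your overall strategy is the same as the paper's: compose via representatives that are isomorphisms between $T$-dense opens, define restriction by base change of such representatives, and for the sheaf axiom descend the domain of definition (using Proposition~\ref{prop_3.2} and the openness of fppf morphisms) and then the morphism itself (by full faithfulness of fppf descent). The one genuine flaw is your repeated identification of $\Dom(f)$ with the source of the ``maximal isomorphism'' representing $f$. These differ in general: Lemma~\ref{lem_3.6} produces the maximal isomorphism on $f^{-1}\Dom(g)$ (with $g=f^{-1}$), which may be strictly smaller than $\Dom(f)$ --- think of the standard quadratic Cremona involution of $\mathbf{P}^2$, which is a morphism away from the three coordinate points but an isomorphism only away from the three coordinate lines. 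Nothing in the definition of $\calR(T)$ excludes such behaviour.

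This matters precisely at the step you single out as the main obstacle: you descend $f'$ on $D'=\Dom(f')$ and then argue that the descended $f$ is an open immersion because ``$f_{T'}=f'$ is an open immersion''. But $f'\colon\Dom(f')\to X_{T'}$ need not be an open immersion, so the premise fails. The repair is easy and available in two forms. Either replace $\Dom(f')$ by the maximal isomorphism domain $f'^{-1}\Dom(g')$ of Lemma~\ref{lem_3.6}, whose formation also commutes with the flat base changes $p_1,p_2$ (it is built from domains of definition and preimages), after which your appeal to the fppf-local nature of open immersions applies verbatim; or do as the paper does: descend the inverse $g'=f'^{-1}$ to a morphism $g\colon V\to X_T$ as well, and check that $f$ and $g$ are mutually inverse on the $T$-dense opens $f^{-1}\Dom(g)$ and $g^{-1}\Dom(f)$ because this holds after the faithfully flat base change $T'\to T$. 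The same correction should be made in your first paragraph, where ``$f\colon\Dom(f)\to\Dom(f^{-1})$'' should be the maximal isomorphism of Lemma~\ref{lem_3.6}; with that change the composition argument is fine and coincides with the paper's.
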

\begin{proof}
  For composition of $f$ and $g$ in $\calR(T)$ (which are equivalence
  classes), choose representatives $(U,f,U')$ and $(V,g,V')$. Then
  $U'\cap V$ is $S$-dense in $X_T$. This gives $(f^{-1}(U'\cap
  V),g\circ f,g(U'\cap V))$. Its equivalence class does not depend on
  the choices of $(U,f,U')$ and $(V,g,V')$ because of
  Proposition~\ref{prop_3.2}. For the inverse of $f$: take a
  representative $(U,f,U')$, then $f^{-1}$ is the equivalence class of
  $(U',f^{-1},U)$ (independent of choice). As composition is
  associative, and as we have the identity map, $\calR(T)$ with this
  composition is a group.

  For $f$ in $\calR(T)$ and $T_1\to T$, we get $f_1=f_{T_1}$ in
  $\calR(T_1)$ by base change as follows. Let us denote by a subscript
  $1$ the pullbacks along $T_1\to T$. Let $f$ be represented by
  $(U,f,U')$, then $U_1$ and $U'_1$ are $T_1$-dense in $X_1$ (this is
  by definition). We let $f_1$ be the equivalence class of
  $(U_1,f_1,U'_1)$; this is independent of the choice of
  $(U,f,U')$. Pullback is functorial, hence $\calR$ is a presheaf of
  groups.

  Let us prove that $\calR$ is a sheaf for the fppf topology on
  $\Sch/S$.  By Proposition~6.3.1 of~\cite{Dem1}, it suffices to
  consider a cover $q\colon S'\to S$ with $q$ faithfully flat and
  locally of finite presentation. Let $S'':=S'\times_S S'$, with the
  two projection morphisms $\pr_1$ and $\pr_2\colon S''\to S'$. Let
  $f'$ be in $\calR(S')$, such that $\pr_1^*f'=\pr_2^*f'$ in
  $\calR(S'')$. Then we have $\Dom(\pr_1^*f')=\pr_1^{-1}\Dom(f')$ by
  Proposition~\ref{prop_3.2}, and the same for~$\pr_2$. Therefore,
  $\pr_1^*\Dom(f') = \pr_2^*\Dom(f')$. Therefore, with
  $U:=q(\Dom(f'))$, which is open in $X$ because $q\colon X_{S'}\to X$
  is faithfully flat and locally of finite presentation hence open, we
  have $\Dom(f')=q^{-1}U$, and $U\subset X$ is $S$-dense because for
  every $s$ in $S$ the fibre $U_s$ is dense in~$X_s$. Hence (fully
  faithfulness of the pullback functor $q^*$ from $\Sch/S$ to the
  category of $S'$-schemes with descent datum to $S$, see
  Proposition~6.3.1(iii)+(iv) of~\cite{Dem1}) there is a unique
  $S$-morphism $f\colon U\to X$ such that $q^*f=f'\colon\Dom(f')\to
  X_{S'}$. Let $g'$ in $\calR(S')$ be the inverse of $f'$. Then we
  also have $V:=q(\Dom(g'))$ open and $S$-dense in $X$ and a unique
  $S$-morphism $g\colon V\to X$ such that $g'=q^*g$. Now the formation
  of $f^{-1}\Dom(g)$ and $g^{-1}\Dom(f)$ commutes with the base change
  $S'\to S$ by Proposition~\ref{prop_3.2}. Over $S'$ we have that $f'$
  and $g'$ are inverses on these two $S'$-dense open subsets of
  $X_{S'}$, and therefore (using again that $q^*$ is fully faithful
  and the fibre-wise criterion for $S$-denseness) $f$ and $g$ are
  inverses on $f^{-1}\Dom(g)$ and $g^{-1}\Dom(f)$ and these two opens
  are $S$-dense in~$X$. This proves that $f$ is in~$\calR(S)$.
\end{proof}

\begin{lemm}
For $T\to S$ and $a\in X(T)$, let $U_a$ be the inverse image
of $(U_{1,2})_T$ under $(a,\id_{X_T}) \colon X_T\to (X\times_S X)_T$,
and let $V_a:=(a,\id_{X_T})^{-1}(U_{1,3})_T$. Then $U_a$ and $V_a$ are
open and 
$T$-dense in $X_T$, and $(f_{1,2})_T\circ (a,\id_{X_T})\colon U_a\to
X_T$ and $(f_{1,3})_T\circ (a,\id_{X_T})\colon V_a\to X_T$ induce
inverse morphisms between $U_a$ and~$V_a$.
\end{lemm}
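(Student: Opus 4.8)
The plan is to reduce everything to the functor-of-points characterisation of $W$ in condition~(1) of Definition~\ref{def_strict-S-rat-grlaw}, which is exactly the relative analogue of the set-theoretic equivalences used in the proof of the corresponding lemma over an algebraically closed field. First, $U_a$ and $V_a$ are open because they are preimages of the open subschemes $(U_{1,2})_T$ and $(U_{1,3})_T$ of $(X\times_S X)_T$ under the morphism $(a,\id_{X_T})$. Their $T$-density is precisely condition~(2) of Definition~\ref{def_strict-S-rat-grlaw}, applied to $a\in X(T)$ with $(i,j)=(1,2)$ and $(i,j)=(1,3)$ respectively; so these two points cost nothing.

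Write $\alpha:=(f_{1,2})_T\circ(a,\id_{X_T})\colon U_a\to X_T$ and $\beta:=(f_{1,3})_T\circ(a,\id_{X_T})\colon V_a\to X_T$. To prove that $\alpha$ factors through $V_a$ and that $\beta\circ\alpha$ is the inclusion $U_a\hookrightarrow X_T$, I would evaluate on the tautological point $b\colon U_a\hookrightarrow X_T$, which lies in $U_a(U_a)$ by the very definition of $U_a$: indeed $(a,\id_{X_T})\circ b=(a,\id_{X_T})|_{U_a}$ factors through $(U_{1,2})_T$, so, writing $a_{U_a}$ for the pullback of $a$ along $U_a\to T$, we have $(a_{U_a},b)\in U_{1,2}(U_a)$. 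Setting $c:=\alpha\circ b=f_{1,2}(a_{U_a},b)$, condition~(1) for $(i,j)=(1,2)$ gives $(a_{U_a},b,c)\in W(U_a)$, and then condition~(1) for $(i,j)=(1,3)$ yields both $(a_{U_a},c)\in U_{1,3}(U_a)$ and $b=f_{1,3}(a_{U_a},c)$. The first statement says exactly that $c=\alpha$ factors through $V_a=(a,\id_{X_T})^{-1}(U_{1,3})_T$, since $(a,\id_{X_T})\circ c=(a_{U_a},c)$; the second says $\beta\circ\alpha=f_{1,3}(a_{U_a},c)=b$, i.e.\ $\beta\circ\alpha$ equals the inclusion $U_a\hookrightarrow X_T$, hence is $\id_{U_a}$ after corestriction. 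By the symmetric argument, interchanging the roles of the indices $(1,2)$ and $(1,3)$, the morphism $\beta$ factors through $U_a$ and $\alpha\circ\beta=\id_{V_a}$. Thus $\alpha$ and $\beta$ are mutually inverse isomorphisms between $U_a$ and $V_a$.

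The only genuinely delicate point is the passage from an assertion about $T'$-points to an actual factorisation of morphisms, and it is handled uniformly by the use of the tautological point above: evaluating each functorial equivalence of condition~(1) on $\id_{U_a}\in U_a(U_a)$ (respectively $\id_{V_a}\in V_a(V_a)$) converts it, via the Yoneda lemma, directly into the desired scheme-theoretic statement. I expect this bookkeeping of tautological points to be the main thing to get right; beyond it, no separation or flatness hypothesis is required beyond what is already built into the Assumptions and into Definition~\ref{def_strict-S-rat-grlaw}, because the functorial description of $W$ lets us avoid ever having to compare two a priori distinct extensions of a rational map.
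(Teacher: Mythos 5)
Your proof is correct and is essentially the paper's own argument: density of $U_a$ and $V_a$ is read off from condition~(2) of Definition~\ref{def_strict-S-rat-grlaw}, and the inverse-morphism statement comes from the two functorial equivalences in condition~(1) characterising $W$-points via $\pr_{1,2}$ and $\pr_{1,3}$. The only difference is that you make explicit, via the tautological points $\id_{U_a}$ and $\id_{V_a}$, the Yoneda step that the paper leaves implicit when passing from statements about $T'$-points to the factorisation of morphisms.
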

\begin{proof}
The $T$-density of the opens $U_a$ and $V_a$ comes from
Definition~\ref{def_strict-S-rat-grlaw}. For $T'\to T$, $b$ and $c$ in
$X(T')$, the condition $(a,b,c)\in W(T')$ is equivalent to ($(a,b)\in
U_{1,2}(T')$ and $c=f_{1,2}(a,b)$ in $X(T')$), and to ($(a,c)\in
U_{1,3}(T')$ and $b=f_{1,3}(a,c)$ in $X(T')$). But $(a,b)\in
U_{1,2}(T')$ means, by definition, that $b\in U_a(T')$. And $(a,c)\in
U_{1,3}(T')$ means that $c\in V_a(T')$.
\end{proof}

\begin{defi}\label{def_G}
For $T\to S$ and $a$ in $X(T)$, we let $\phi(a)$ denote the element of
$\calR(T)$ given by $(U_a,f_{1,2}\circ (a,\id_X),V_a)$. Hence, for
every $T'\to T$, $\phi(a)\colon U_a(T')\to V_a(T')$ is the bijection
$x\mapsto ax$. We have $\phi\colon X\to \calR$, a morphism of sheaves
on $S_{\mathrm{fppf}}$, from the sheaf of sets $X$ to the sheaf of
groups~$\calR$. We let $G$ be the subsheaf of groups of $\calR$ generated
by~$\phi(X)$.
\end{defi}

\begin{lemm}\label{lem_phi=homom}
For every $T\to S$, for all $(a,b)\in U_{1,2}(T)$ we have
$\phi(a)\circ \phi(b)=\phi(ab)$.  For $T\to S$ and $a$ in $X(T)$ we
have $\psi(a)$ in $\calR(T)$ given by $x\mapsto xa$ on some appropriate
$T$-dense open subschemes of~$X_T$. For every $T\to S$ and all $a$ and $b$ in
$X(T)$, we have $\phi(a)\circ\psi(b) = \psi(b)\circ\phi(a)$
in~$\calR(T)$. For every $T\to S$, every $g$ in $G(T)$ commutes with
every $\psi(b)$ ($b\in X(T)$).
\end{lemm}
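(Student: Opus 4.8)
The plan is to treat the four assertions in turn, reducing the first three to the associativity axiom (condition~(3) of Definition~\ref{def_strict-S-rat-grlaw}) together with the $T$-density bookkeeping of Proposition~\ref{prop_3.2}, and reserving a genuinely sheaf-theoretic argument for the last.

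First I would establish that $\psi(a)$ is a well-defined element of $\calR(T)$. Setting $U_a':=(\id_{X_T},a)^{-1}(U_{1,2})_T$ and $V_a':=(a,\id_{X_T})^{-1}(U_{2,3})_T$, condition~(2) of Definition~\ref{def_strict-S-rat-grlaw} shows that both are $T$-dense open in $X_T$, and $(f_{1,2})_T\circ(\id_{X_T},a)$ and $(f_{2,3})_T\circ(a,\id_{X_T})$ induce inverse morphisms between them, exactly as for $\phi(a)$ in the preceding lemma but with the roles of left and right translation exchanged. This gives $\psi(a)\in\calR(T)$ realised by $x\mapsto xa$.

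For the identities $\phi(a)\circ\phi(b)=\phi(ab)$ (with $(a,b)\in U_{1,2}(T)$) and $\phi(a)\circ\psi(b)=\psi(b)\circ\phi(a)$, I would exhibit a single $T$-dense open subscheme of $X_T$ on which both sides are defined and agree, and then invoke the uniqueness of $S$-rational maps into a target with separated fibres (Proposition~\ref{prop_3.2}(4)--(5)) to conclude equality in $\calR(T)$. Concretely, the locus where all of $bx$, $a(bx)$, and $(ab)x$ are defined (respectively, where all of $xb$, $ax$, $a(xb)$, and $(ax)b$ are defined) is a finite intersection of pullbacks of the various $U_{i,j}$, hence $T$-dense by Proposition~\ref{prop_3.2}(2)--(3); on it the associativity axiom yields $a(bx)=(ab)x$ (respectively $a(xb)=(ax)b$), which is precisely the asserted pointwise equality of the two $T$-rational maps. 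Because the associativity axiom is stated functorially over every $T$, this verification is already global; the sheaf property of $\calR$ (Lemma~\ref{lem_G=sheaf}) is not even needed here.

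The substance is the last assertion, and the main obstacle is exactly its careful formulation. Here I would use that $G$ is, by Definition~\ref{def_G}, the subsheaf of groups of $\calR$ generated by $\phi(X)$, that is, the fppf-sheafification of the presheaf subgroup generated by $\phi(X)$. Thus for $g\in G(T)$ there is an fppf cover $\{T_j\to T\}$ over which $g|_{T_j}$ is a finite word in the $\phi(a)^{\pm 1}$ with $a\in X(T_j)$. Fixing $b\in X(T)$, functoriality of $\psi$ gives $\psi(b)|_{T_j}=\psi(b|_{T_j})$, and the third assertion applied over $T_j$ shows that each letter $\phi(a)$ commutes with $\psi(b|_{T_j})$; since the centraliser of $\psi(b|_{T_j})$ in $\calR(T_j)$ is a subgroup, it contains the whole word $g|_{T_j}$. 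Hence $g\,\psi(b)$ and $\psi(b)\,g$ have equal restrictions along the cover, and since $\calR$ is a sheaf (Lemma~\ref{lem_G=sheaf}) they are equal in $\calR(T)$. The only delicate points are making precise the passage from ``subsheaf generated by $\phi(X)$'' to the local word description of $g$, and globalising the local commutation via the sheaf property; everything else is mechanical.
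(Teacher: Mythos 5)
Your proposal is correct and follows essentially the same route as the paper: the first three assertions are reduced to the associativity axiom (checked on a $T$-dense open and then upgraded to an equality of $T$-rational maps), and the last is obtained by writing $g$ fppf-locally as a word in the $\phi(a)^{\pm 1}$ and using that the centraliser of $\psi(b)$ is a subgroup, together with the sheaf property of $\calR$. The paper's own proof is just a terser statement of exactly these points, so there is nothing to add.
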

\begin{proof}
The first statement is the associativity of the birational group
law. The statement concerning $\psi$ is proved just as for~$\phi$. The
third statement is again the associativity of the birational group
law. The last statement follows from the definition of $G$: fppf
locally on $T$, $g$ is in the subgroup of $\calR(T)$ generated by
$\{\phi(a): a\in X(T)\}$.
\end{proof}

\begin{lemm}
Let $T\to S$, $g$ be in $G(T)$ and $x\in\Dom(g)(T)$ such that $gx=x$
in $X(T)$. Then $g=\id_{X_T}$.
\end{lemm}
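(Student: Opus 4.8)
The plan is to transcribe the proof of the classical analogue, Lemma~\ref{lem_2.7}, into the relative setting, the only real issue being to keep careful track of the domains of definition of the rational maps involved. Recall from Lemma~\ref{lem_phi=homom} that every base change of $g$ commutes with every right translation $\psi(b)$, and that $\phi(x)\colon U_x\to V_x$, $c\mapsto xc$, is a $T$-birational map with $V_x$ being $T$-dense. Since $g\in G(T)\subset\calR(T)$, its domain $\Dom(g)$ is a $T$-dense open of $X_T$ by Proposition~\ref{prop_3.2}(5). First I would set $V:=V_x\cap\Dom(g)$, which is $T$-dense by Proposition~\ref{prop_3.2}(2), and $W_x:=\phi(x)^{-1}(V)\subset U_x$, which is again $T$-dense because $\phi(x)$ is an isomorphism onto $V_x$ and $T$-density may be tested fibrewise. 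Introducing $W_x$ is exactly the device used in the classical proof, where one restricts to those $b$ with $xb\in\Dom(g)$.

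Next I would base change to $T':=W_x$, viewed as a $T$-scheme, and consider the tautological point $b\in X(T')$ given by the inclusion $W_x\hookrightarrow X_T$, together with the constant point $x_{T'}$. Since $W_x\subset U_x$, one has $(x_{T'},b)\in U_{1,2}(T')$, so $\psi(b)$ is defined at $x_{T'}$ and $y:=\psi(b)(x_{T'})=x_{T'}b=\phi(x)(b)$ is a point of $X(T')$. As $b$ is the tautological point of $W_x$, the point $y$ corresponds to the isomorphism $\iota\colon W_x\to V$ induced by $\phi(x)$, followed by the inclusion $V\hookrightarrow X_T$. In particular $y$ factors through $\Dom(g)$, so that $y\in\Dom(g_{T'})(T')$ by Proposition~\ref{prop_3.2}(5); this is the definedness we shall need, and it is exactly what the choice of $W_x$ buys us.

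I would then run the commutation computation. Both composite rational maps $g_{T'}\circ\psi(b)$ and $\psi(b)\circ g_{T'}$ are defined at $x_{T'}$: indeed $x_{T'}\in\Dom(\psi(b))$; since $x\in\Dom(g)(T)$ and domains commute with base change, $x_{T'}\in\Dom(g_{T'})(T')$ and $g_{T'}(x_{T'})=x_{T'}$; hence $g_{T'}(x_{T'})=x_{T'}\in\Dom(\psi(b))$; and $\psi(b)(x_{T'})=y\in\Dom(g_{T'})$ by the previous paragraph. As these two rational maps agree in $\calR(T')$ by Lemma~\ref{lem_phi=homom} (here $g_{T'}$ lies in $G(T')$ because $G$ is a subsheaf), evaluation at $x_{T'}$ gives
\[
g_{T'}(y)=\bigl(g_{T'}\circ\psi(b)\bigr)(x_{T'})=\bigl(\psi(b)\circ g_{T'}\bigr)(x_{T'})=\psi(b)(x_{T'})=y.
\]
Under the identification $X(T')=\mathrm{Hom}_T(W_x,X_T)$ this equality reads $g\circ\iota=\iota$, and cancelling the isomorphism $\iota\colon W_x\to V$ yields $g|_V=\mathrm{incl}_V$, that is, $g$ agrees with $\id_{X_T}$ on the $T$-dense open $V$. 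By the uniqueness part of Proposition~\ref{prop_3.2}(5), two $T$-rational maps agreeing on a $T$-dense open coincide, so $g=\id_{X_T}$ in $\calR(T)$, as desired.

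The step I expect to be the main obstacle is not any single computation but the bookkeeping of domains: in the relative setting one cannot argue pointwise as in Lemma~\ref{lem_2.7}, so the whole argument hinges on choosing $W_x$ so that the tautological point $y$ \emph{provably} lands in $\Dom(g)$, and on the fact (Proposition~\ref{prop_3.2}(5)) that formation of $\Dom$ commutes with the base change $T'\to T$. Once this definedness is secured, the commutation of $g$ with right translations finishes the proof exactly as in the classical case.
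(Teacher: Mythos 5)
Your proof is correct and follows the same route as the paper: use the commutation $g\circ\psi(b)=\psi(b)\circ g$ from Lemma~\ref{lem_phi=homom}, evaluate at $x$ against the tautological point $b$ of a suitable $T$-dense open, and conclude that $g$ restricts to the identity on a $T$-dense open of $X_T$. Your version merely makes explicit the choice of $W_x=\phi(x)^{-1}(V_x\cap\Dom(g))$ and the domain bookkeeping that the paper's one-line computation leaves implicit.
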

\begin{proof}
For every $T'\to T$ and $b\in X(T')$ such that $b\in U_x(T')$ and
$xb\in\Dom(g)(T')$, we have
\[
g(xb) = (g\circ\psi(b))x = (\psi(b)\circ g)x = xb\quad\text{in $X(T')$}.
\]
It follows that $g$ is the identity on the $T$-dense open subset of
$X_T$ that is the image of $(\id_{X_T},b)^{-1}(U_{1,2})_T$ under the
open immersion $\psi(b)\colon (\id_{X_T},b)^{-1}(U_{1,2})_T\to
X_T$. Hence $g=\id_{X_T}$.
\end{proof}

\begin{lemm}\label{lem_phi_inj}
The morphism of sheaves $\phi\colon X\to G$ is injective.
\end{lemm}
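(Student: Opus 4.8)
The plan is to mimic the proof of injectivity of $\phi$ in the classical setting, replacing the evaluation at a dense closed point by an fppf descent argument. Since $G$ is a subsheaf of $\calR$ and $\phi$ factors through $G$, it suffices to prove that $\phi\colon X\to\calR$ is injective; and since a morphism of sheaves of sets is a monomorphism exactly when it is injective on sections over every object, I would fix $T\to S$ together with $a,b\in X(T)$ satisfying $\phi(a)=\phi(b)$ in $\calR(T)$, and aim to show $a=b$ in $X(T)$. By the definition of equality in $\calR(T)$, the $T$-birational maps $x\mapsto ax$ and $x\mapsto bx$ agree on some $T$-dense open $U\subset U_a\cap U_b$ (note $U_a\cap U_b$ is $T$-dense by Proposition~\ref{prop_3.2}).

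Next I would exploit that $\pr_{2,3}\colon W\to X\times_S X$ is an open immersion. Writing $p\colon U\to T$ for the structure morphism and $\iota\in X_T(U)$ for the inclusion of $U$, condition~(1) of Definition~\ref{def_strict-S-rat-grlaw} shows that the triples $(p^*a,\iota,a\iota)$ and $(p^*b,\iota,b\iota)$ lie in $W(U)$, where $a\iota:=f_{1,2}(p^*a,\iota)$ and $b\iota:=f_{1,2}(p^*b,\iota)$ (this uses $\iota\in U_a(U)\cap U_b(U)$, valid since $U\subset U_a\cap U_b$). Their images under $\pr_{2,3}$ are $(\iota,a\iota)$ and $(\iota,b\iota)$, and these coincide because $a\iota=b\iota$ on $U$ by the choice of~$U$. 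As an open immersion is a monomorphism, $\pr_{2,3}$ is injective on $U$-points, so the two triples are equal; comparing first coordinates yields $p^*a=p^*b$ in $X(U)$.

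Finally I would descend this equality from $U$ to $T$, and this is the step I expect to be the main obstacle, since the classical proof simply evaluates at a dense point whereas here one must argue by fppf descent. The key is that $p\colon U\to T$ is an fppf cover: it is flat and locally of finite presentation, being the composite of the open immersion $U\hookrightarrow X_T$ with the faithfully flat, locally finitely presented morphism $X_T\to T$, and it is surjective because $T$-density of $U$ means that each fibre $U_t$ is dense, hence nonempty, in the nonempty fibre $X_t$ (this is where the no-embedded-components hypothesis enters, through the fibrewise density criterion recalled after the Assumptions). Since $X$ is representable and therefore a sheaf for the fppf topology, $X(T)\to X(U)$ is injective, and $p^*a=p^*b$ forces $a=b$ in $X(T)$, as desired.
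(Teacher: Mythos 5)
Your proof is correct and follows essentially the same route as the paper: apply $\pr_{2,3}$ (an open immersion, hence a monomorphism) to the two points of $W$ obtained from the tautological point of a $T$-dense open of $X_T$, then descend the resulting equality along the faithfully flat, locally finitely presented cover of $T$. The only cosmetic difference is that the paper evaluates on all of $U_a\cap U_b$ (implicitly using Proposition~\ref{prop_3.2}(4) to upgrade agreement on a $T$-dense open to agreement on $U_a\cap U_b$), whereas you work directly on the $T$-dense open where the two representatives agree; both variants are fine.
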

\begin{proof}
Let $T\to S$, and $a$ and $b$ be in $X(T)$, such that $\phi(a) =
\phi(b)$ in $G(T)$. Then, for all $T'\to T$ and all $x\in (U_a\cap
U_b)(T')$, we have $(a,x,ax)\in W(T')$ and $(b,x,bx)\in W(T')$. But
these two points of $W(T')$ have the same image in $(X\times_S X)(T')$
under $\pr_{2,3}$, as $ax=(\phi a)x=(\phi b)x=bx$. As $\pr_{2,3}\colon
W\to X^2$ is an open immersion, $a=b$ in $T'$. Now take $T':=U_a\cap
U_b$, which is $T$-dense in $X_T$, hence faithfully flat over $T$, and
take for $x$ the identity. Then we get $a=b$ in $X(T')$, hence $a=b$
in $X(T)$. 
\end{proof}

\begin{lemm}\label{lem_phi_open_imm}
The morphism of sheaves $\phi\colon X\to G$ is representable, and an
open immersion.
\end{lemm}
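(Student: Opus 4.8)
The plan is to verify representability and the open-immersion property after base change along an arbitrary point of $G$. Concretely, it suffices to show that for every $S$-scheme $T$ and every $g\in G(T)$ the fibre product $Z:=X\times_{\phi,G,g}T$ is representable by a scheme and that $Z\to T$ is an open immersion. Replacing $S$ by $T$, I regard $Z$ as the subsheaf of $X_T$ over $T$ given, for $T'\to T$, by $Z(T')=\{a\in X(T'):\phi(a)=g_{T'}\text{ in }G(T')\}$, where $g_{T'}$ denotes the pullback of $g$. That $Z\to T$ is a monomorphism is immediate from the injectivity of $\phi$ (Lemma~\ref{lem_phi_inj}): over any $T'$ at most one $a$ satisfies $\phi(a)=g_{T'}$. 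So the whole content is to produce the parameter $a$ and to pin down the open locus of $T$ over which it exists.

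The mechanism for recovering $a$ from $g$ is the identity $a=f_{2,3}(x,g(x))$: if $g=\phi(a)$ then $g(x)=ax$, and applying the right-division morphism $f_{2,3}$ to $(x,g(x))$ returns $a$. I would turn this into a construction as follows. Form the graph morphism $(\id_{X_T},g)\colon\Dom(g)\to X_T\times_T X_T$ and let $\Omega:=(\id_{X_T},g)^{-1}(U_{2,3})$, an open subscheme of $\Dom(g)$; set $V:=$ the image of $\Omega$ in $T$, which is open because $X_T\to T$ is flat and locally of finite presentation, hence open. For the tautological point $b\in\Omega(\Omega)\subseteq X(\Omega)$ (the inclusion $\Omega\hookrightarrow X_T$), put $a_0:=f_{2,3}(b,g(b))\in X(\Omega)$. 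By the defining property of $W$ in Definition~\ref{def_strict-S-rat-grlaw}(1), the relation $(b,g(b))\in U_{2,3}$ together with $a_0=f_{2,3}(b,g(b))$ says exactly that $(a_0,b,g(b))\in W$, that is, $(a_0,b)\in U_{1,2}$ and $a_0b=g(b)$.

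Now I would upgrade this local recovery to the equality $g=\phi(a_0)$ over all of $\Omega$. Consider $h:=\phi(a_0)^{-1}\circ g\in G(\Omega)$. From $a_0b=g(b)$ and the associativity of the birational group law (Definition~\ref{def_strict-S-rat-grlaw}(3)) one gets $h(b)=\phi(a_0)^{-1}(a_0b)=b$, so $h$ fixes the section $b\in\Dom(h)(\Omega)$; by the lemma asserting that an element of $G$ fixing a section is the identity, $h=\id_{X_\Omega}$, whence $g_\Omega=\phi(a_0)$. Since $\Omega\to V$ is faithfully flat and locally of finite presentation (an open of $X_T$ dominating $V$), it is an fppf cover, and the locally recovered sections agree on overlaps by injectivity of $\phi$; as $\calR$, hence $G$ and $X$, are fppf sheaves (Lemma~\ref{lem_G=sheaf}), they descend to a unique $a\in X(V)$ with $g_V=\phi(a)$. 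This exhibits $V\subseteq T$ inside $Z$. Conversely, if $\phi(a)=g_{T'}$ for some $a\in X(T')$, then $\Dom(g)=U_a$ is $T'$-dense, and $\Omega$ (whose formation commutes with base change by Proposition~\ref{prop_3.2}) contains it, so the image of $\Omega_{T'}$ in $T'$ is all of $T'$; hence $T'\to T$ factors through $V$. Therefore $Z=V$ as subsheaves of $T$, $Z$ is represented by the open subscheme $V$, and $Z\to T$ is an open immersion.

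The main obstacle is the passage, carried out in the third paragraph, from ``$g$ is fppf-locally a left translation over part of $T$'' to the two on-the-nose equalities $g_\Omega=\phi(a_0)$ and $Z=V$. The first relies on the strength of the fixed-point lemma, which itself encodes associativity and the commutation of $G$ with the right translations $\psi$ (Lemma~\ref{lem_phi=homom}); the second rests on density bookkeeping—that $\Omega\to V$ is genuinely surjective, and that $\Dom(g)$ dominates the base exactly when $g$ is a translation. This bookkeeping must be done through Proposition~\ref{prop_3.2}, and requires some care precisely because $X$ is assumed to have only separated fibres rather than to be separated over $S$.
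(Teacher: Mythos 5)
Your argument is correct and is essentially the paper's own proof: you construct the same open $V'=(\id_{X_T},g)^{-1}(U_{2,3})$ and its image $V$ in $T$, recover the candidate translation parameter as $f_{2,3}(x,gx)$, descend it along the fppf cover $V'\to V$ using the injectivity of $\phi$ (Lemma~\ref{lem_phi_inj}), and prove the universal property by the same density bookkeeping; your explicit use of the fixed-point lemma to get $g_\Omega=\phi(a_0)$ just spells out a step the paper leaves implicit. The one place to be slightly more careful is the converse: Proposition~\ref{prop_3.2}(5) gives $\Dom(g)\times_T T'=\Dom(g_{T'})$ only for flat $T'\to T$, so rather than claiming $\Omega_{T'}\supseteq U_a$ on the nose you should (as the paper does) work fppf-locally on $T'$ and pick a section $x$ of the $T'$-dense open $U_a\cap(\Dom(g)\times_T T')$, which still forces $T'\to T$ to factor through $V$.
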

\begin{proof}
Let $T\to S$ and $g\in G(T)$. What we must show is that there is an
open subscheme $V\subset T$ and an $S$-morphism $a_V\colon V\to X$
such that $\phi(a_V)=g$ in $G(V)$ and such that for all $T'\to T$,
$a\in X(T')$ with $\phi(a)=g$ in $G(T')$, $T'\to T$ factors through
$V$ and $a=a_V$ in $X(T')$.

Let us first produce $V$ and $a_V$. We have $\Dom(g)\subset X_T$, open
and $T$-dense, and we have $(\id_{X_T},g)\colon\Dom(g)\to (X\times_S
X)_T$. This gives us the open subset $V':=(\id_{X_T},g)^{-1}(U_{2,3})_T$
of $\Dom(g)$. As $f_T\colon X_T\to T$ is open (being flat and locally
of finite presentation), we get an open subset $V$ of $T$ as $V:=f_T V'$.
On $V'$ we have
\[
a_{V'}\colon V' \stackrel{(\id_{X_T},g)}{\relbar\joinrel\relbar\joinrel\longrightarrow} (U_{2,3})_V 
\stackrel{(f_{2,3})_V}{\relbar\joinrel\relbar\joinrel\longrightarrow} X_V , \quad x\longmapsto f_{2,3}(x,gx).
\] 
Then $(a_{V'},\id_{X_T},g)$ in $X(V')^3$ is in $W(V')$, hence
$g=\phi(a_{V'})$ in $G(V')$. We claim that there is a unique $a_V$ in
$G(V)$ that is mapped to $a_{V'}$ in $G(V')$ under $f_V\colon V'\to
V$, that is, $a_{V'}=f_V^* a_V$. Now note that $f_V\colon V'\to V$ is
faithfully flat and locally of finite presentation, being the
composition of the $V$-dense open immersion of $V'$ into $X_V$, and
the morphism $X_V\to V$. Therefore (Proposition~6.3.1(iii)+(iv)
of~\cite{Dem1}) $f_V\colon V'\to V$ is a morphism of descent, that is,
the pullback functor $f_V^*$ from the category of schemes over $V$ to
the category of schemes over $V'$ with descent datum for $V'\to V$ is
fully faithful. Let $V'':=V'\times_V V'$, and let $\pr_1$ and $\pr_2$
be the two projections from $V''$ to $V'$. Then we have
$\pr_1^*a_{V'}$ and $\pr_2^*a_{V'}$ in $X(V'')$, and we have, in
$G(V'')$,
\[
\phi(\pr_1^*a_{V'}) = \pr_1^*\phi(a_{V'}) = \pr_1^*g = \pr_2^*g = 
\pr_2^*\phi(a_{V'}) = \phi(\pr_2^*a_{V'}).
\]
Lemma~\ref{lem_phi_inj} then gives $\pr_1^*a_{V'}=\pr_2^*a_{V'}$ in
$X(V'')$. But then, by the fully faithfulness of $f_V^*$, there is a
unique $a_V$ in $X(V)$ that gives $a_{V'}$ by pullback. Then we have,
in $G(V)$, $g=\phi(a_V)$, because
$f_V^*g=\phi(a_{V'})=f_V^*\phi(a_V)$.

Let us now show that $V\subset T$ and $a_V\in X_V(V)=X_V$ have the
universal property mentioned above. So let $T'\to T$, and $a\in X(T')$
such that $\phi(a)=g$ in $G(T')$. What is to be shown is that $T'\to
T$ factors through $V\subset T$ and that $a=a_V$ in $X(T')$. For this,
we may work locally on $T'$ in the fppf topology, hence we may assume
that we have $x$ in $X(T')$ such that $x\in(\Dom g)(T') $ and $x\in
(a,\id_X)^{-1}U_{1,2}$. Then we have $(a,x,gx)\in W(T')$, hence
$(x,gx)\in U_{2,3}(T')$ and $a=f_{2,3}(x,gx)$ in $X(T')$. But this
means that $T'\to T$ factors through $V'=(\id_{X_T},g)^{-1}(U_{2,3})_T$,
and hence through $V$ because $V$ is the image of $V'$ in~$T$.
\end{proof}

\begin{defi}
For $T\to S$ and $g$ in $G(T)$ let
$\phi_g:=(g{\cdot})\circ\phi_T\colon X_T\to G_T$, that is, $\phi_g$ is
$\phi_T\colon X_T\to G_T$, followed by left multiplication by $g$
on~$G_T$. For $T'\to T$ and $x$ in $X(T')$, it sends $x$ to
$g\circ(\phi x)$ in~$G(T')$.
\end{defi}

By Lemma~\ref{lem_phi_open_imm}, the $\phi_g$ are open immersions. For
$T\to S$ and $g\in G(T)$ the fibred product of $\phi_T\colon X_T\to
G_T$ and $\phi_{g^{-1}}\colon X_T\to G_T$ is their intersection
$X_T\cap (g^{-1}{\cdot})X_T$. Composing each of them with
$(g{\cdot})\colon G_T\to G_T$ gives an isomorphism $(g{\cdot})$ from
$X_T\cap (g^{-1}{\cdot})X_T$ to $(g{\cdot})X_T\cap X_T$.

\begin{lemm}\label{lem_dom_g}
The formation of $(g{\cdot})X_T\cap X_T$ from $g$ commutes with every
base change $T'\to T$. For $T\to S$, and $g\in G(T)$, we have $X_T\cap
(g^{-1}{\cdot})X_T=\Dom(g)$, $(g{\cdot})X_T\cap X_T=\Dom(g^{-1})$, and
$g$ and $g^{-1}$ are inverse morphisms between them.
\end{lemm}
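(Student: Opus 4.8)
\section*{Proof proposal}

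The plan is to realise every object inside $G_T$ via the open immersion $\phi_T\colon X_T\to G_T$ of Lemma~\ref{lem_phi_open_imm}, and to identify the two open intersections with domains of definition by the very right-translation computation used in the proof of Lemma~\ref{lem_charts_compatible}. For the base-change statement I would argue directly, without reference to $\Dom$: the open subscheme $(g{\cdot})X_T\cap X_T$ is the intersection of the images of the two open immersions $\phi_g=(g{\cdot})\circ\phi_T$ and $\phi_T$ into $G_T$. Since $\phi$ is a morphism of sheaves and $g$ pulls back to $g_{T'}$, we have $(\phi_g)_{T'}=\phi_{g_{T'}}$ for every $T'\to T$; open immersions and their images are stable under \emph{arbitrary} base change, as is the intersection of two open subschemes of $G_T$. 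Hence the formation of $(g{\cdot})X_T\cap X_T$ commutes with every $T'\to T$. This is stronger than what Proposition~\ref{prop_3.2}(5) yields for $\Dom$, and it is precisely the gain of realising $\Dom(g)$ as such an intersection.

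By symmetry it suffices to prove $X_T\cap(g^{-1}{\cdot})X_T=\Dom(g)$; applying this with $g$ replaced by $g^{-1}$ and using that intersection is symmetric then gives $(g{\cdot})X_T\cap X_T=\Dom(g^{-1})$. I transport $X_T\cap(g^{-1}{\cdot})X_T$ through $\phi_T$ to an open subscheme $\Omega\subset X_T$. Because $\phi_{g^{-1}}$ is an open immersion, hence a monomorphism, $\Omega$ represents the subfunctor of $X_T$ consisting of those $a\in X(T')$ for which $\phi(a)=g^{-1}\circ\phi(b)$, equivalently $g\circ\phi(a)=\phi(b)$, for some $b\in X(T')$ (such a $b$ existing fppf-locally and then being unique). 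So I must show $\Omega=\Dom(g)$ as open subschemes of $X_T$.

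For the inclusion $\Dom(g)\subset\Omega$ I would test on the tautological point $a\in X(\Dom(g))$, put $c:=ga\in X(\Dom(g))$, and prove $g\circ\phi(a)=\phi(c)$. Setting $u:=\phi(c)^{-1}\circ g\circ\phi(a)\in G(\Dom(g))$ and using that $g$ commutes with every right translation $\psi(x)$ (Lemma~\ref{lem_phi=homom}) together with $ax=\psi(x)(a)$ and $g(\psi(x)a)=\psi(x)(ga)=cx$, one computes $u(x)=\phi(c)^{-1}(cx)=x$ on a $\Dom(g)$-dense open. Thus fppf-locally $u$ fixes a point of its domain, whence $u=\id$ by the lemma that an element of $G$ fixing a point of its domain is the identity (together with the sheaf property of $G$). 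Therefore $g\circ\phi(a)=\phi(c)\in\phi(X)$, the tautological $a$ factors through $\Omega$, and $\Dom(g)\subset\Omega$.

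The reverse inclusion $\Omega\subset\Dom(g)$ is the step I expect to be the main obstacle, since there one must extend $g$ across the specific point $a$ rather than over a dense open. Working fppf-locally on $\Omega$ I may assume given $b\in X$ with $g\circ\phi(a)=\phi(b)$, so that $g(ax)=bx$ generically. Choosing fppf-locally a right translator $x$ with $a\in\Dom(\psi(x))$, $ax\in\Dom(g)$ and $bx\in\Dom(\psi(x)^{-1})$ — possible by the density conditions in Definition~\ref{def_strict-S-rat-grlaw}(2) — and using the identity $g=\psi(x)^{-1}\circ g\circ\psi(x)$ in $\calR$ (Lemma~\ref{lem_phi=homom}), I obtain $a=\psi(x)^{-1}(ax)\in\Dom(g)$ with $g(a)=\psi(x)^{-1}(bx)=b$. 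Since factoring through the open subscheme $\Dom(g)$ is an fppf-local condition on the source, this gives $\Omega\subset\Dom(g)$, hence $\Omega=\Dom(g)$. Finally, the isomorphism $(g{\cdot})\colon X_T\cap(g^{-1}{\cdot})X_T\to(g{\cdot})X_T\cap X_T$ noted before the statement sends the tautological $a$ to $ga$, so under the identifications just made it is the morphism $g\colon\Dom(g)\to\Dom(g^{-1})$; by symmetry its inverse is $g^{-1}$, in agreement with Lemma~\ref{lem_3.6}.
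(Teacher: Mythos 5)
Your proof is correct, but it takes a genuinely different route from the paper's. The paper disposes of the first assertion as immediate from the definitions (the intersection is a fibred product of two open immersions into $G_T$, hence compatible with any base change), and for the identification with $\Dom(g)$ it passes to a geometric point $t\to T$, invokes the already-established field case (Theorem~\ref{thm_Weil}, i.e.\ Lemma~\ref{lem_charts_compatible}) to see that $X_{t,\red}\cap(g_t^{-1}{\cdot})X_{t,\red}=\Dom(g_t)$, and concludes that the two open subschemes of $X_T$ coincide because they have the same geometric fibres. You instead stay in the relative setting and prove, in effect, the $T'$-point version of Lemma~\ref{lem_charts_compatible}: the inclusion $\Dom(g)\subset\Omega$ by testing on the tautological point and reducing $\phi(ga)^{-1}\circ g\circ\phi(a)=\id$ to the ``fixes a point, hence is the identity'' lemma via commutation with the $\psi(x)$, and the reverse inclusion by conjugating $g$ by a well-chosen right translation and using maximality of $\Dom(g)$ together with fppf-locality of factoring through an open subscheme. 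Your argument is longer, but it buys two things: it avoids the passage to (non-flat) geometric points, where the compatibility $\Dom(g)_t=\Dom(g_t)$ is not literally covered by Proposition~\ref{prop_3.2}(5) and is instead absorbed into the fibrewise comparison; and it establishes directly the relative description of the fibres of $\delta$ that Lemma~\ref{lem_delta_surj} then declares ``true by definition on the sets of $T'$-points.'' The paper's proof is shorter because it reuses the field case as a black box. Two small points of care in your write-up: when you evaluate $g(ax)=bx$ at the chosen translator $x$, you should also record the condition $(b,x)\in U_{1,2}$ among the finitely many dense open conditions on $x$ (this is harmless, as you intersect finitely many $T$-dense opens); and the conclusion $u=\id$ from $u$ fixing a point of a dense open requires, as you indicate, the fppf base change to that dense open followed by the sheaf property of $G$ -- exactly the device used in the paper's proof of Lemma~\ref{lem_phi_inj}.
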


\begin{proof}
The first statement follows directly from the definitions. Let us prove
the second statement. Let $t\to T$ be a geometric point. Then
$X_{t,\red}$ is a dense open subvariety of $G_{t,\red}$
(Theorem~\ref{thm_Weil}) $X_{t,\red}\cap (g^{-1}{\cdot})X_{t,\red}$ is
precisely the open subset of $X_{t,\red}$ that is mapped into
$X_{t,\red}$ under left-multiplication by~$g_t$. Therefore,
$X_{t,\red}\cap (g^{-1}{\cdot})X_{t,\red}$ equals $\Dom(g_t)$: it is
contained in $\Dom(g_t)$ because $g_t$ is a morphism on it, and the
points in its complement are mapped, by $g_t$,
outside~$X_{t,\red}$. We conclude that the open subschemes $X_T\cap
(g^{-1}{\cdot})X_T$ and $\Dom(g)$ have the same geometric fibres over
$T$, and hence are equal.
\end{proof}

\begin{lemm}\label{lem_delta_surj}
Let $\delta\colon X\times_S X\to G$ be the morphism of sheaves given by
$(a,b)\mapsto \phi(a)\circ\phi(b)^{-1}$. For $T\to S$ and $g$ in
$G(T)$, the fibre of $\delta$ over $g$ is the transpose of the graph
of $(g{\cdot})\colon X_T\cap(g^{-1}{\cdot})X_T\to (g{\cdot})X_T\cap
X_T$. The morphism $\delta$ is representable, faithfully flat and
locally of finite presentation.
\end{lemm}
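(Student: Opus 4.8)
The plan is to recognise the fibre of $\delta$ over a point $g\in G(T)$ as the fibre product of two representable open immersions into $G_T$, so that representability drops out for free, to identify that fibre product with the transpose graph by means of Lemma~\ref{lem_dom_g}, and finally to deduce the flatness and surjectivity of $\delta$ from the corresponding properties of $X_T\to T$.

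First I would rewrite the fibre condition. For $T'\to T$ and $(a,b)\in X(T')^2$, the equality $\delta(a,b)=g$ in $G(T')$ reads $\phi(a)\circ\phi(b)^{-1}=g$, that is $\phi(a)=g\circ\phi(b)=\phi_g(b)$. Hence the fibre of $\delta$ over $g$ is exactly the fibre product $X_T\times_{G_T}X_T$ formed along $\phi_T$ and $\phi_g\colon X_T\to G_T$. By Lemma~\ref{lem_phi_open_imm} the morphism $\phi$ is a representable open immersion, and since $(g{\cdot})$ is an automorphism of $G_T$ so is $\phi_g=(g{\cdot})\circ\phi_T$. Therefore the projection to the second factor is the base change of the open immersion $\phi_T$ along $\phi_g$, hence again an open immersion of schemes; in particular this fibre product is a scheme, which is the representability of $\delta$.

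To identify it explicitly I would compute the image of that second projection: it consists of those $b$ for which $\phi_g(b)=g\circ\phi(b)$ lies in the image $X_T$ of $\phi_T$, i.e. with $\phi(b)\in(g^{-1}{\cdot})X_T\cap X_T$, which by Lemma~\ref{lem_dom_g} is $\Dom(g)$. Thus the projection identifies the fibre with the open subscheme $\Dom(g)\subset X_T$, and on it $a=g(b)$; the resulting correspondence $b\mapsto(g(b),b)$ is the transpose of the graph of $(g{\cdot})\colon X_T\cap(g^{-1}{\cdot})X_T\to (g{\cdot})X_T\cap X_T$, as claimed.

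For the last assertion I would argue fibrewise over $G$: the base change of $\delta$ along any $T\to G$, i.e. along any $g\in G(T)$, is the morphism from this fibre to $T$, which under the identification above is $\Dom(g)\hookrightarrow X_T\to T$. The open immersion is flat and locally of finite presentation, and $X_T\to T$ is faithfully flat and locally of finite presentation by base change from $X\to S$, so $\Dom(g)\to T$ is flat and locally of finite presentation; it is surjective because $\Dom(g)$ is $T$-dense in $X_T$, so each fibre $\Dom(g)_t$ is dense in the non-empty $X_t$ and hence non-empty. The one point that is not purely formal is precisely this surjectivity, which is where the $T$-density of domains of definition enters; everything else follows directly from $\phi$ being a representable open immersion and from Lemma~\ref{lem_dom_g}.
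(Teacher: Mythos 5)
Your proposal is correct and follows essentially the same route as the paper: the paper's (very terse) proof likewise reads off the fibre on $T'$-points as the fibre product of $\phi_T$ and $\phi_g$, identifies it with $\Dom(g_T)$ via Lemma~\ref{lem_dom_g}, and deduces faithful flatness and local finite presentation from the $T$-density of $\Dom(g_T)$ in $X_T$ together with $X\to S$ being faithfully flat and locally of finite presentation. You have merely spelled out the steps that the paper delegates to the paragraph preceding Lemma~\ref{lem_dom_g}.
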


\begin{proof}
The first statement is true by definition on the sets of $T'$-points for
all $T'\to T$, and therefore it is true. The second statement follows
from the first, plus the facts that
$X_T\cap(g^{-1}{\cdot})X_T=\Dom(g_T)$ is open and $T$-dense in $X_T$
and that $X\to S$ is faithfully flat and locally of finite
presentation.
\end{proof}

\begin{enonce}{Convention}\label{conventions_alg_spaces}
By an {\em algebraic space} we mean the quotient of an \'etale equivalence
relation of schemes, as in~\cite{RG}~I.5.7 or in the Stacks Project~\cite{SP}.
Thus in contrast with Knutson~\cite{Kn}, our algebraic spaces are not
necessarily quasi-separated; see also
Appendix~A of~\cite{CLO} for further comments on this point.
\end{enonce}

We can now state a variant of Theorem~3.7 of~\cite{Art}.

\begin{theo}\label{thm-alg-space}
Let $S$ be a scheme and $(X,W)$ a strict $S$-birational group law, with
$X\to S$ faithfully flat locally of finite presentation and whose
geometric fibres are separated and have no embedded components. With
the notation as above, we have the following. 
\begin{enumerate}
\item Existence:
{\rm (i)} $G\to S$ is a group algebraic space and its formation from
$(X,W)/S$ commutes with base change on~$S$.

{\rm (ii)} $\phi\colon X\to G$ is representable by $S$-dense open
  immersions, and is compatible with (rational) group laws on $X$
  and~$G$.

{\rm (iii)} The morphism $\delta\colon X\times_S X\to G$, $(a,b)\mapsto
  \phi(a)\phi(b)^{-1}$ is representable, faithfully flat and locally
  of finite presentation.
\item Uniqueness: the properties in {\rm 1.} determine $G$ and
  $\phi\colon X\to G$ up to unique isomorphism.
\item Properties: $G\to S$ is faithfully flat and locally of finite
  presentation.  If $X\to S$ is smooth (resp. quasi-compact,
  resp. with geometrically irreducible fibres), then $G\to S$ also.
\item Separation: $G\to S$ is locally separated, that is, its diagonal
is an immersion. If $X\to S$ is quasi-separated, or if $S$ is locally
noetherian, then $G\to S$ is quasi-separated. If $X\to S$ is separated,
then $G\to S$ also.
\item Representability: if $X\to S$ is of finite presentation, then
locally on~$S$ for the fppf topology, the algebraic space $G\to S$
is a scheme, faithfully flat and of finite presentation.
\end{enumerate}
\end{theo}
\begin{proof}
Let us write $X^2$ for $X\times_S X$ and $G^2$ for $G\times_S G$.

1. In Definition~\ref{def_G} we defined $G$ to be a sheaf.
Lemma~\ref{lem_phi_open_imm} says that $\phi$ is an open immersion.
By Lemma~\ref{lem_dom_g} it is $S$-dense (in the obvious sense).
Lemma~\ref{lem_phi=homom} gives the compatibility with group laws.
This proves (ii). Assertion (iii) is part of Lemma~\ref{lem_delta_surj}.
Then it follows from Artin's theorem on flat equivalence relations,
proved without quasi-separatedness condition in~\cite{SP},
Theorem~\href{http://math.columbia.edu/algebraic_geometry/stacks-git/locate.php?tag=04S6}{04S6},
that $G$ is an algebraic space. The fact that its formation commutes
with base change is true by definition.

2. Suppose that $G'$ and $\phi'\colon X\to G'$ satisfy the properties
in~1. Properties (i) and (ii) give an action by $G'$ on $X$ by
$S$-birational maps. This action is faithful, hence gives an injective
morphism of sheaves from $G'$ to~$\calR$. As morphisms to $\calR$,
$\phi$ and $\phi'$ are equal. Therefore $G$ and $G'$ are equal, being
generated by the images of $\phi$ and~$\phi'$ by property~(iii). Hence
we have an isomorphism between $G$ and $G'$ compatible with $\phi$
and~$\phi'$. There is at most one such an isomorphism because of
property~(iii).

3. Since $X\to S$ is faithfully flat locally of finite presentation,
then $X^2\to S$ also is. If moreover $X\to S$ is smooth or quasi-compact
or with geometrically irreducible fibres, then $X^2$ also. In any case,
using the presentation $\delta\colon X^2\to G$, we see that $G$ inherits
the properties.

4. Let $\delta^*\Delta_G\colon X^2\times_G X^2\to X^2\times_S X^2$
be the monomorphism of schemes obtained by pulling back the
diagonal $\Delta_G:G\to G^2$ along $\delta\times\delta$, and
$\pi=\pr_{234}\circ\delta^*\Delta_G\colon X^2\times_G X^2\lto X\times_S X^2$.
By Lemma~\ref{lem_dom_g} and Lemma~\ref{lem_delta_surj} applied with $T=X^2$
and $g=\delta$, the map $\pi$
is an open immersion. Let $\mathcal{U}\subset X\times_S X^2$ be the image
of $\pi$ and $\mathcal{V}=\pr_{234}^{-1}(\mathcal{U})\subset X^2\times_S X^2$.
What we just said means that
$X^2\times_G X^2$ is a section of $\mathcal{V}\to\mathcal{U}$. Since
sections of morphisms of schemes are immersions, this proves that
$\delta^*\Delta_G$ is an immersion, hence $\Delta_G$ is an immersion
(note that the property of being an immersion is fppf local on the
target by~\cite{SP},
Lemma~\href{http://math.columbia.edu/algebraic_geometry/stacks-git/locate.php?tag=02YM}{02YM}).
If $X\to S$ is quasi-separated, then $X^2\to S$ also and we see
using the presentation $\delta\colon X^2\to G$ that $G\to S$ is
quasi-separated. Now let us assume $S$ locally noetherian.
Let $V$ be an open affine in $S$
and $U_1$, $U_2$, $U_3$, $U_4$ open affines in $X_V$. The inverse image of
$U_1\times_SU_2\times_SU_3\times_SU_4$ in $(X\times_S
X)\times_G(X\times_S X)$ is isomorphic to an open subset of
the affine noetherian scheme $U_2\times_VU_3\times_VU_4$, and is
therefore quasi-compact, i.e. $G\to S$ is quasi-separated.

Finally we prove that $X\to S$ separated implies $G\to S$ separated.
For this, we use some notions on algebraic spaces that either have a
treatment in the existing literature (e.g. in~\cite{Kn}, \cite{SP}) or
translate immediately from the analogous notions for schemes (e.g.,
the schematic image of a morphism can be defined by descent from an
fppf presentation of the target). In order to show that
$\Delta_G\colon G\to G^2$ is a closed immersion, we may pass to a
covering of $S$ in the fppf topology; we use $T:=X\to S$, and we
denote $g\in G_T(T)$ the tautological point given by~$\phi\colon T\to
G$. Now let $D$ be the image of $\Delta_G$, let $\bar D$ be its
schematic closure inside $G\times_S G$, and let $\partial D=\bar
D\setminus D$ be the boundary.  The formation of $\partial D$ commutes
with flat base change on $G^2$, because it is the case for images of
immersions and schematic images of arbitrary morphisms. This has two
consequences. The first is that $(\partial D)_T$ is the boundary of
the image of~$\Delta_{G_T}$. Hence $(\partial D)_T$, inside
$G_T\times_T G_T$, is invariant by the action of $g$ by simultaneous
left translation ($(x,y)\mapsto(gx,gy)$), because $g$ gives an
automorphism of $\Delta_{G_T}\colon G_T\to G_T^2$ by simultaneous
left-translation.  The second consequence is that $(\partial D)\cap
X^2=\varnothing$, because that intersection is the boundary of $D\cap
X^2$ inside $X^2$, hence is empty because $X\to S$ is separated. Now
assume $\partial D\ne\varnothing$. Then there is an algebraically
closed field $k$, an $s$ in $S(k)$ and a $(x,y)\in (\partial D)(k)$
over~$s$.  Let $T_s$ denote the fiber of $T\to S$ over~$s$, it is a
$k$-scheme, locally of finite type (recall that it is~$X_s$). Then the
sets $\{t\in T_S(k):g(t)x\in X_s(k)\}$ and $\{t\in T_S(k):g(t)y\in
X_s(k)\}$ are both open and dense in $T_s(k)$, hence their
intersection is non-empty. This contradicts that $(\partial D)_T\cap
X_T^2=\varnothing$. Thus $\partial D=\varnothing$ and we are done.

5. We refer to Theorem~3.7(iii) of~\cite{Art}.
To prove that, Artin shows that for each $s$ in $S$
there is an open neighborhood $V$ of $s$ and a $T\to V$ that is
faithfully flat and of finite presentation such that $G_T$ is covered
by the open immersions $\phi_g\colon X_T\to G_T$, where $g$ varies in
a finite subset of~$G(T)$. Then $G_T$ is a scheme, faithfully flat and
of finite presentation over~$T$. The fact that Artin assumes $W\to
X^3$ to be of finite presentation is harmless; he used this assumption
only in the proof of Proposition~3.5 of~\cite{Art}, which says that
the projections from $X^2\times_G X^2$ to $X^2$ are of finite
presentation, and we have proved that result in 3. and 4. above.
\end{proof}

As a complement, we indicate with reference to the literature the state
of the art concerning the question of representability by a scheme of
the group algebraic space $G$ of Theorem~\ref{thm-alg-space}.

\begin{theo}\label{cor-alg-space}
In Theorem \ref{thm-alg-space}, assume moreover that $S$ is locally
noetherian. Then the group algebraic space $G$ over $S$
is representable by a scheme in the following cases:
\begin{enumerate}
\item $S$ has dimension $0$, e.g. the spectrum of a field
or of an artinian ring,
\item $S$ has dimension 1 and $X\to S$ is separated,
\item $X\to S$ is smooth.
\end{enumerate}
\end{theo}
\begin{proof}
1. Assume that $S$ is the spectrum of a field. Then $G$ is quasi-separated
by~\ref{thm-alg-space}.4, and the set $U$ consisting of all points of $G$
admitting a scheme-like neighborhood is topologically dense
by~\cite{RG}~5.7.7. Using finite Galois descent, one shows that~$U$
is invariant under $G$, since any finite set of points of $U$ is contained
in an affine open subscheme of $U$. In our case, $G=GU$ so that
$U=G$ is a scheme. If $S$ is local artinian, the result follows
from the previous argument since any finite subset
of a group over a local artinian ring is contained in an affine open
(Lemma~5.6.1 in~\cite{Ber}).

2. This is a result of Anantharaman, see \cite{An}~IV.4.B.

3. As discussed in~\cite{BLR} Chapter~6,
this is an application of the theorem of the square, the
quasi-projectivity of torsors in the case of a normal base as proved
in~\cite{Ra2}, and a suitable criterion for effectivity of descent
proved in~\cite{Ra1}, Theorem~4.2.
More precisely, according to Theorem~\ref{thm-alg-space},
the diagonal of $G$ is a quasi-compact immersion hence
$G$ is a smooth algebraic space in the sense used in~\cite{BLR}.
It follows that the connected component along the unit section
$G^0\to S$ is a well-defined open subspace: indeed, for group
schemes this result is \cite{Ber}, th.~3.10, (iii)~$\To$~(iv)
and the arguments of the proof in {\em loc. cit.} work
verbatim, replacing \cite{EGA}.IV.15.6.5  by~\cite{Rom}~2.2.1
for the finitely presented $W\to T$ that appears in \cite{Ber}.
Applying part (b) of \cite{BLR}~6.6/2 with $G:=G^0$ acting by
translations on the space $X:=G$ with open $S$-dense
subspace $Y:=X$ shows that $G$ is a scheme.
\end{proof}

We end this section with some remarks and examples.

\begin{rema}
1. Let $G$ be the quotient $\mathbf{G}_{\rm m}/\mathbf{Z}$ of the
multiplicative group over a field by the subsheaf generated by a section
$x\in \mathbf{G}_{\rm m}(k)$ that is of infinite order. This is a group
algebraic space whose diagonal is not an immersion
(see~\ref{thm-alg-space}.4) and hence is not representable
by a group scheme, even after base change (see~\ref{thm-alg-space}.5).

2. By~\ref{cor-alg-space}.3, if $S$ is locally noetherian
the following conditions
on a smooth group algebraic space $G\to S$ are equivalent:
$G$ comes from a strict birational group law on a scheme $X\to S$;
$G$ is representable by a scheme;
$G$ contains an $S$-dense open subspace which is a scheme.
In~\cite{Ra2}~X.14, one finds an example showing that these
conditions are not always satisfied. In this example, the base $S$
is the affine plane over a field of characteristic~$2$ and the
group algebraic space $G$ is a quotient of the square of the
additive group $(\mathbf{G}_{{\rm a},S})^2$ by an \'etale closed
subgroup.

3. We do not know an example of a strict $S$-birational group law
$(X,W)$ as in Theorem~\ref{thm-alg-space} such that $G\to S$ is not
representable by a scheme.
\end{rema}

\section{Application to N\'eron models}

Let $S$ be a Dedekind scheme (a noetherian, integral, normal scheme
of dimension $1$) with field of rational functions $K$, and let $A_K$
be a $K$-abelian variety.

A {\em model} of $A_K$ over $S$ is a pair composed of an $S$-scheme
$A$ and a $K$-isomorphism $A\times_S \Spec(K)\simeq A_K$.
Usually, one refers to such a model by the letter $A$ alone. If $A$ is
an $S$-model of $A_K$, we often say that its generic fibre "is" $A_K$.
The nicest possible model one can have is a proper smooth $S$-model,
but unfortunately this does not exist in general. In the search for
good models for abelian varieties, N\'eron's tremendous idea is to
abandon the requirement of properness, insisting on smoothness and
existence of a group structure.  He was led to the following notion.

\begin{defi}
A {\em N\'eron model} of $A_K$ over $S$ is a smooth, separated model
of finite type~$A$ that satisfies the {\em N\'eron mapping property}:
each $K$-morphism $u_K\colon Z_K\to A_K$ from the generic fibre of a
smooth $S$-scheme $Z$ extends uniquely to an $S$-morphism $u\colon
Z\to A$.
\end{defi}

Our aim is to prove that a N\'eron model exists. Note that once
existence is established, the universal property implies that the
N\'eron model $A$ is unique up to canonical isomorphism; it implies
also that the law of multiplication extends, so that $A$ is an
$R$-group scheme. Therefore it could seem that for the construction of
the N\'eron model, we may forget the group structure and recover it as
a bonus. The truth is that things go the other way round: the N\'eron
model is constructed first and foremost as a group scheme, and then
one proves that it satisfies the N\'eron mapping property.

An important initial observation is that $A_K$ extends to an abelian
scheme over the complement in $S$ of a finite number of closed points
$s$, so one can reduce the construction of the N\'eron model in
general to the construction in the local case by glueing this abelian
scheme together with the finitely many local N\'eron models (i.e. over
the spectra of the local rings $\mathcal{O}_{S,s}$). Therefore it will
be enough for us to consider the case where $S$ is the spectrum of a
discrete valuation ring $R$ with field of fractions $K$ and residue
field $k$. We fix a separable closure $k\to k^{\rm{s}}$ and a strict
henselisation $R\to R^{\mathrm{sh}}$; we have an extension of fractions fields
$K\to K^{\mathrm{sh}}$.

If $A$ is a smooth, finite type and separated model satisfying the
extension property of the above definition only
for $Z$ \'etale, we say that it is a {\em weak N\'eron
  model}. Alternatively, it is equivalent to require that $A$
satisfies the extension property for $Z=\Spec(R^{\mathrm{sh}})$, as one can
see using the fact that $R^{\mathrm{sh}}$ is the inductive limit of ``all'' the
discrete valuation rings $R'$ that are \'etale over $R$.  In contrast
with N\'eron models, weak N\'eron models are not unique since their
special fibre contains in general plenty of extraneous components, as
we shall see.  The N\'eron model will be obtained as the rightmost
scheme in the following chain (hooked arrows denote open immersions):

\bigskip
\bigskip

\hspace{-1cm}
\begin{tikzpicture}
\draw (0,0) node {$A_0$};
\draw (0,-1) node {{\scriptsize \begin{tabular}[t]{c} Any flat \\ proper model \end{tabular}}};
\draw [<-,>=angle 60] (0.5,-0.05) -- (2.5,-0.05);
\draw (1.5,0.8) node
{{\scriptsize \begin{tabular}{c} Blowing-up \\ finitely \\ many times \end{tabular}}};
\draw (3,0) node {$A_1$};
\draw (3,-1) node {{\scriptsize \begin{tabular}{c} Smoothening \\ of $A_0$ \end{tabular}}};
\draw [<-,>=angle 60] (3.5,-0.05) -- (5.5,-0.05);
\draw (4.5,0.8) node
{{\scriptsize \begin{tabular}{c} Taking \\ smooth \\ locus \end{tabular}}};
\draw (5.5,-0.05) arc (-90:95:0.1);
\draw (6,0) node {$A_2$};
\draw (6,-1) node {{\scriptsize \begin{tabular}{c} Weak \\ N\'eron model \end{tabular}}};
\draw [<-,>=angle 60] (6.5,-0.05) -- (8.5,-0.05);
\draw (7.5,0.8) node
{{\scriptsize \begin{tabular}{c} Removing \\ non-minimal \\ components \end{tabular}}};
\draw (8.5,-0.05) arc (-90:95:0.1);
\draw (9,0) node {$A_3$};
\draw (9,-1) node {{\scriptsize \begin{tabular}{c} Strict birational \\
group law \end{tabular}}};
\draw [->,>=angle 60] (9.5,-0.05) -- (11.5,-0.05);
\draw (10.5,0.8) node
{{\scriptsize \begin{tabular}{c} Applying \\ Weil's \\ theorem \end{tabular}}};
\draw (9.5,0.15) arc (85:270:0.1);
\draw (12,0) node {$A_4$};
\draw (12,-1) node {{\scriptsize \begin{tabular}{c} N\'eron model \\ \ \end{tabular}}};
\end{tikzpicture}

\section{N\'eron's smoothening process} \label{sc:sm_prcss}

One way to start the construction of the N\'eron model
is to choose an embedding of $A_K$ into some
projective space $\mathbf{P}^N_K$ (this is possible by a classical
consequence of the theorem of the square). Then the schematic
closure of $A_K$ inside $\mathbf{P}^N_R$ is a proper flat $R$-model $A_0$.
Then, the valuative criterion of properness implies that the
canonical map $A_0(R^{\mathrm{sh}})\to A_K(K^{\mathrm{sh}})$ is surjective. Thus if
$A_0$ happened to be smooth, it would be a weak N\'eron model of
$A_K$. It is known that the special fibre $A_0\otimes k$ is proper and
geometrically connected (see~\cite{EGA}~IV.15.5.9), unfortunately it
may be singular and even nonreduced. In order to recover smoothness at
least at integral points, in Theorem~\ref{theo_smoothening} below we
will produce a {\em smoothening} of $A_0$ as defined in the following.

\begin{defi}
Let $A$ be a flat $R$-scheme of finite type with smooth generic fibre.
A {\em smoothening} of $A$ is a proper morphism $A'\to A$ which is an
isomorphism on the generic fibres and such that the canonical map
$A'_{\mathrm{sm}}(R^{\mathrm{sh}})\to A(R^{\mathrm{sh}})$ is bijective, where $A'_{\mathrm{sm}}$ is
the smooth locus of $A'$.
\end{defi}

In order to construct a smoothening, we will repeatedly blow up $A$
along geometrically reduced closed subschemes of the special fibre
containing the specializations of the points of $A(R^{\mathrm{sh}})$ that are
"maximally singular", in a sense that we shall define soon. This leads
to consider that the natural object to start with is a pair $(A,E)$
where $E$ is a given subset of $A(R^{\mathrm{sh}})$. Note that for any proper
morphism $A'\to A$ which is an isomorphism on the generic fibres, the
set $E$ lifts uniquely to $A'(R^{\mathrm{sh}})$ and we will identify it with
its image.  The sense in which the singularity is maximal is measured
by two invariants $\delta(A,E)$ and $t(A,E)$ which we now introduce.

\begin{defi}
Let $A$ be a flat $R$-scheme of finite type with smooth generic fibre
and let $E$ be a subset of $A(R^{\mathrm{sh}})$. For each
$a\colon\Spec(R^{\mathrm{sh}})\to A$ in $E$, we set
\begin{center}
$\delta(a)=$ the length of the torsion submodule of $a^*\Omega^1_{A/R}$.
\end{center}
The integer $\delta(A,E)=\max\{\delta(a),a\in E\}\ge 0$ is called
{\em N\'eron's measure for the defect of smoothness}.
\end{defi}

It is easy to see that $\delta(a)$ remains bounded for $a\in E$, so
that $\delta(A,E)$ is finite (see \cite{BLR} 3.3/3). Moreover, this
invariant does indeed measure the failure of smoothness:

\begin{lemm}
We have $\delta(A,E)=0$ if and only if $E\subset A_{\mathrm{sm}}(R^{\mathrm{sh}})$.
\end{lemm}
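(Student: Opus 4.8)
The statement is equivalent, point by point, to the assertion that for a single $a\in A(R^{\mathrm{sh}})$ one has $\delta(a)=0$ if and only if $a\in A_{\mathrm{sm}}(R^{\mathrm{sh}})$, since $\delta(A,E)=\max_{a\in E}\delta(a)$. So I would fix $a$ and prove this pointwise equivalence. Write $R'=R^{\mathrm{sh}}$, a discrete valuation ring with uniformizer $t$, residue field $k'=k^{\mathrm{s}}$ and fraction field $K'$, set $A'=A\times_R R'$, and observe that $a$ becomes a section $\sigma\colon\Spec(R')\to A'$; since $\Omega^1$ commutes with base change, $a^*\Omega^1_{A/R}=\sigma^*\Omega^1_{A'/R'}$. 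Let $\xi$ and $x'$ be the images under $\sigma$ of the generic and closed points; as $\sigma$ is a section, $\kappa(\xi)=K'$ and $\kappa(x')=k'$, and $x'$ lies in the closure of $\xi$. Put $d=\dim_\xi A'_{K'}$, noting $\xi$ is in the smooth locus by hypothesis, so $\sigma^*\Omega^1_{A'/R'}\otimes_{R'}K'$ is $K'$-free of rank $d$. Because $R'$ is a discrete valuation ring, $\sigma^*\Omega^1_{A'/R'}$ decomposes as a free module of rank $d$ plus a finite-length torsion module of length exactly $\delta(a)$; hence $\delta(a)=0$ if and only if $\sigma^*\Omega^1_{A'/R'}$ is free of rank $d$.

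For the easy implication, assume $a\in A_{\mathrm{sm}}(R')$, i.e. $\sigma$ factors through the smooth locus of $A'/R'$. There $\Omega^1_{A'/R'}$ is locally free (of rank $d$ near the image of $\sigma$), so its pullback $\sigma^*\Omega^1_{A'/R'}$ is locally free over $R'$, hence free, hence torsion-free, giving $\delta(a)=0$.

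For the hard implication, assume $\delta(a)=0$, so $\sigma^*\Omega^1_{A'/R'}$ is free of rank $d$. Reducing modulo $t$ and using base-change compatibility of differentials gives $\dim_{k'}\Omega^1_{A'_{k'}/k'}\otimes\kappa(x')=d$; as $x'$ is a $k'$-rational point this number is the embedding dimension of $A'_{k'}$ at $x'$. It then remains to identify this with the local dimension of the fibre. The upper bound $\dim\mathcal O_{A'_{k'},x'}\le d$ is automatic (Krull dimension is at most the embedding dimension). The matching lower bound $\dim\mathcal O_{A'_{k'},x'}\ge d$ is where flatness enters: $A'/R'$ is flat and of finite type over the discrete valuation ring $R'$, so $t$ is a nonzerodivisor on $\mathcal O_{A',x'}$, the closure $\overline{\{\xi\}}$ has dimension $d+1$ and dominates $\Spec(R')$, and its special fibre is equidimensional of dimension $d$ through $x'$; equivalently, the dimension formula for the flat local homomorphism $\mathcal O_{R',s'}\to\mathcal O_{A',x'}$ shows dimension does not drop along the specialisation $\xi\rightsquigarrow x'$. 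Hence $\dim\mathcal O_{A'_{k'},x'}=d$ equals the embedding dimension, so $\mathcal O_{A'_{k'},x'}$ is regular. Since $x'$ is $k'$-rational its residue field is separable over $k'$, so regularity is equivalent to smoothness of the fibre $A'_{k'}$ at $x'$; finally, flatness of $A'/R'$ promotes fibrewise smoothness at $x'$ to smoothness of $A'/R'$ at $x'$. Thus $\sigma$ lands in the smooth locus, i.e. $a\in A_{\mathrm{sm}}(R')$.

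The main obstacle is the dimension count in the hard direction: one must know that the uniformizer $t$ is a nonzerodivisor and that no component of the special fibre through $x'$ drops in dimension, which is exactly the flatness of $A/R$ over the discrete valuation ring. The passage ``regular at a rational point $\Longleftrightarrow$ smooth over the field'' also deserves care, since $k'=k^{\mathrm{s}}$ may be imperfect; it is valid precisely because $x'$ is rational, hence has separable residue field. Everything else—the splitting of finitely generated modules over a discrete valuation ring and the base-change behaviour of $\Omega^1$—is routine.
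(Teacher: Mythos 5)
Your argument is correct and is essentially the paper's proof: both reduce to a single $a\in E$, use the smoothness of the generic fibre to identify the free rank of $a^*\Omega^1_{A/R}$ with $d_K=\dim_{a_K}A_K$, pass to the special fibre, compare $d_K$ with the local dimension of $A_k$ at $a_k$ (the paper cites Chevalley semicontinuity where you invoke the flat dimension formula), apply the Jacobian/regularity criterion at the separable point $a_k$, and conclude with ``flat plus smooth fibre implies smooth''. One sentence is literally false: $\overline{\{\xi\}}$ is the closure of a closed point of the generic fibre, hence $1$-dimensional, not of dimension $d+1$ (you presumably meant the irreducible component of $A'$ through $\xi$); this is harmless since the adjacent justification via the dimension formula for the flat local map $\mathcal O_{R'}\to\mathcal O_{A',x'}$ does give the required lower bound $\dim\mathcal O_{A'_{k'},x'}\ge d$.
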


\begin{proof}
Let $a\in E$ and let $d_K=\dim_{a_K}(A_K)$ and $d_k=\dim_{a_k}(A_k)$
be the local dimensions of the fibres of $A$. By the Chevalley
semi-continuity theorem, we have $d_K\le d_k$. If $\delta(a)=0$ then
$a^*\Omega^1_{A/R}$ is free generated by $d_K$ elements. Then, at the
point $a_k$, $\Omega^1_{A_k/k}$ can be generated by $d_K$ elements,
hence also by $d_k$ elements, so that $A_k$ is smooth according to
\cite{EGA}~IV$_4$.17.15.5. Being $R$-flat, the scheme $A$ is smooth at
$a_k$ and $a\in A_{\mathrm{sm}}(R^{\mathrm{sh}})$. Conversely, if $a\in
A_{\mathrm{sm}}(R^{\mathrm{sh}})$ then $\Omega^1_{A/R}$ is locally free in a
neighbourhood of $a_k$ and hence $\delta(a)=0$.
\end{proof}

Starting from a pair $(A,E)$ as above, we define geometrically reduced
$k$-subschemes $Y^1,U^1,\dots,Y^t,U^t$ of $A_k$ and the {\em canonical
  partition}
\[
E=E^1\sqcup E^2\sqcup\dots\sqcup E^t
\]
as follows:
\begin{enumerate}
\item $Y^1$ is the Zariski closure in $A_k$ of the specializations of
  the points of $E$,
\item $U^1$ is the largest $k$-smooth open subscheme of $Y^1$ where
  $\Omega^1_{A/R}|_{Y^1}$ is locally free,
\item $E^1$ is the set of points $a\in E$ whose specialization is in
  $U^1$.
\end{enumerate}
Note that $Y^1$ is geometrically reduced because it contains a
schematically dense subset of $k^{\rm{s}}$-points (see
\cite{EGA}~IV$_3$.11.10.7) and $U^1$ is dense by generic smoothness. For
$i\ge 1$, we remove $E^1\sqcup\dots\sqcup E^i$ from $E$ and we iterate
this construction.  In this way we define $Y^{i+1}$ as the Zariski
closure in $A_k$ of the specialization of the points of $E\setminus
(E^1\sqcup\dots\sqcup E^i)$, $U^{i+1}$ as the largest smooth open
subscheme of $Y^{i+1}$ where $\Omega^1_{A/R}$ is locally free, and
$E^{i+1}$ as the set of points $a\in E$ with specialization in
$U^{i+1}$.  Since $A_k$ is noetherian, there is an integer $t\ge 0$
such that $Y^{t+1}=U^{t+1}=\varnothing$ and we end up with the canonical
partition $E=E^1\sqcup E^2\sqcup\dots\sqcup E^t$.

\begin{defi}
We write $t=t(A,E)\ge 1$ for the {\em length of the canonical partition}.
\end{defi}

The crucial ingredient of the smoothening process is given by the following
lemma, due to N\'eron and Raynaud.

\begin{lemm} \label{delta_decreases}
Let $a\in E$ be such that $a_k$ is a singular point of $A_k$. Assume that
$a\in E_i$, let $A'\to A$ be the blow-up of $Y_i$, and let $a'$ be the unique
lifting of $a$ to $A'$. Then $\delta(a')<\delta(a)$.
\end{lemm}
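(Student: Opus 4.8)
The plan is to reduce the statement to a local computation of torsion lengths at the point $a_k$ and then to exploit the special choice of the center $Y=Y^i$. Since the assertion concerns only the stalk of $\Omega^1_{A/R}$ along the section $a$, I would first replace $A$ by the spectrum of the local ring $\mathcal{O}=\mathcal{O}_{A,a_k}$, or even its completion, and record that $\delta(a)$ is the length of the torsion submodule of the finite $R$-module $a^*\Omega^1_{A/R}$. Choosing a closed immersion of a neighbourhood into some $\mathbf{A}^N_R$ and generators of its ideal, this module is presented as the cokernel of the Jacobian matrix evaluated along $a$, so that $\delta(a)$ is the sum of the positive valuations of the elementary divisors of that matrix over the discrete valuation ring $R$. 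By the criterion $\delta=0\iff$ smoothness established just above, the hypothesis that $a_k$ is singular is exactly the statement $\delta(a)>0$; the goal is therefore to show that this torsion length drops by at least one.

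Next I would use the defining properties of the stratum to fix adapted coordinates. Because $a\in E^i$, its specialization $a_k$ lies in $U^i$, where $Y=Y^i$ is smooth over $k$ and $\Omega^1_{A/R}|_Y$ is locally free. I would choose functions $t_1,\dots,t_r$ on $\mathcal{O}$ restricting to \'etale coordinates on $Y$ near $a_k$, and complete them to generators so that the ideal of $Y$ in $\mathcal{O}$ is $(\pi,z_1,\dots,z_c)$, the $z_j$ being transverse to $Y$. The point of the local-freeness hypothesis is that it forces the torsion of $a^*\Omega^1_{A/R}$ to be carried by the transverse differentials $dz_j$: along $Y$ the classes of $dt_1,\dots,dt_r$ together with a basis of $\Omega^1_{A/R}|_Y$ behave freely, so that all defect of smoothness sits in the conormal directions.

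The heart of the argument is the behaviour of these data under the blow-up $b\colon A'\to A$ of $Y$. Since $a_k\in Y$ we have $a^*\pi\in\pi R$ and $a^*z_j\in\pi R$, while $v(a^*\pi)=1$ is minimal among the generators of the ideal pulled back along $a$; hence the unique lift $a'$ lands in the chart $\mathcal{O}[z_1/\pi,\dots,z_c/\pi]$ of the blow-up, with new transverse coordinates $z_j':=z_j/\pi$. The decisive identity is that in $\Omega^1_{A'/R}$ one has
\[
dz_j=d(\pi z_j')=\pi\,dz_j'+z_j'\,d\pi=\pi\,dz_j',
\]
because $d\pi=0$ over $R$. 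Thus the natural comparison morphism $a^*\Omega^1_{A/R}\to a'^*\Omega^1_{A'/R}$ is the identity on the classes of the $dt_i$ and multiplies each transverse class $dz_j$ by $\pi$; equivalently, passing from $A$ to $A'$ divides the transverse columns of the presentation matrix by $\pi$.

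It remains to read off the strict inequality from this division. I would compare the Smith normal forms of the two presentation matrices: dividing the transverse block by $\pi$ lowers the valuation of every affected elementary divisor, and the local-freeness of $\Omega^1_{A/R}|_Y$ guarantees that the tangential directions contribute no torsion and that no new torsion is created by the transformed relations. Since $\delta(a)>0$, at least one transverse elementary divisor has positive valuation and is strictly reduced, which gives $\delta(a')\le\delta(a)-1$. The main obstacle is precisely this last bookkeeping: one must verify, using the hypothesis $a_k\in U^i$ in full force, that all torsion of $a^*\Omega^1_{A/R}$ genuinely comes from the conormal directions and that the blown-up relations introduce no compensating torsion, so that the $\pi$-division produces a genuine decrease rather than a mere rearrangement between the free and torsion parts. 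This is the quantitative content of N\'eron's computation, carried out in detail in \cite{BLR}, and I would import it as the technical core while organizing the local reduction and the coordinate choice as above.
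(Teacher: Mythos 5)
Your proposal is consistent with the paper, which does not actually prove this lemma: its entire ``proof'' is the sentence ``This is an ingenious computation of commutative algebra, which we omit'' together with a reference to \cite{BLR}~3.3/5. Your outline --- localizing at $a_k$, using that $a_k\in U^i$ so that $Y^i$ is smooth and $\Omega^1_{A/R}|_{Y^i}$ is locally free there, identifying the blow-up chart containing $a'$, and the key identity $dz_j=\pi\,dz_j'$ since $d\pi=0$ --- matches the argument of that reference, and like the paper you ultimately defer the decisive torsion bookkeeping to it.
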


\begin{proof}
This is an ingenious computation of commutative algebra, which we omit.
We refer to \cite{BLR} 3.3/5.
\end{proof}

For $E\subset A(R^{\mathrm{sh}})$, we denote by $E_k$ the set of
specializations of the points of $E$ in the underlying topological
space of $A_k$. We now make a definition that is tailor-made for an
inductive proof of the theorem below.

\begin{defi}
Let $A$ be a flat $R$-scheme of finite type with smooth generic fibre
and let $E$ be a subset of $A(R^{\mathrm{sh}})$. We say that a closed
subscheme $Y\subset A_k$ is {\em $E$-permissible} if it is
geometrically reduced and the set $F=Y\cap E_k$ satisfies:
\begin{enumerate}
\item $F$ lies in the smooth locus of $Y$,
\item $F$ lies in the largest open subscheme of $Y$ where $\Omega^1_{A/R}|_{Y}$
is locally free,
\item $F$ is dense in $Y$.
\end{enumerate}
We say that the blow-up $A'\to A$ with center $Y$ is {\em $E$-permissible}
if $Y$ is $E$-permissible.
\end{defi}

Recall that for any proper morphism $A'\to A$ which is an isomorphism
on the generic fibres, the set $E$ lifts uniquely to $A'(R^{\mathrm{sh}})$ and
we identify it with its image.

\begin{theo} \label{theo_smoothening}
Let $A$ be a flat $R$-scheme of finite type with smooth generic fibre
and let $E$ be a subset of $A(R^{\mathrm{sh}})$. Then there exists a morphism
$A'\to A$, a finite sequence of $E$-permissible blow-ups, such that
each point $a\in E$ lifts uniquely to a smooth point of $A'$.
\end{theo}

\begin{proof}
We proceed by induction on the integer $\delta(A,E)+t(A,E)\ge 1$. If
$\delta(A,E)=0$, then $E$ lies in the smooth locus of $A$ and no
blow-up is needed at all; this covers the initial case of the
induction. If $\delta(A,E)\ge 1$, we consider the canonical partition
$E=E^1\sqcup \dots\sqcup E^t$. The closed subscheme $Y^t\subset A_k$
is $E^t$-permissible, since $E^{t+1}=\varnothing$ means exactly that
the specializations of points of $E^t$ lie in the open subset of the
smooth locus of $Y^t$ where $\Omega^1_{A/R}|_{Y^t}$ is locally free,
and are dense in $Y^t$. Let $A'\to A$ be the blow-up of $Y^t$.
By Lemma~\ref{delta_decreases}, we have
$\delta(A',E^t)<\delta(A,E^t)$. By the inductive assumption, there
exists a morphism $A''\to A'$ which is a finite sequence of
$E^t$-permissible blow-ups such that each point of $E^t$ lifts
uniquely to a point in the smooth locus of $A''$.  If $t=1$, we are
done. Otherwise let $E''\subset A''(R^{\mathrm{sh}})$ be obtained by looking
at $E$ as a subset of $A''(R^{\mathrm{sh}})$ and removing $E^t$, and for $1\le
i\le t-1$ let $(E'')^i$ be the set $E^i$ viewed in $E''$. Since
$A''\to A$ is a sequence of $E^t$-permissible blow-ups, it does not
affect $E^1\sqcup \dots\sqcup E^{t-1}$.  In this way one sees that
$E''=(E'')^1\sqcup\dots\sqcup (E'')^{t-1}$ is the canonical partition
of $E''$, therefore $t(A'',E'')<t(A,E)$. Applying the inductive
assumption once again, we obtain a morphism $A'''\to A''$ which is a
finite sequence of $E''$-permissible blow-ups such that points of
$E''$ lift to smooth points of $A'''$. Then $A'''\to A$ is the
morphism we are looking for.
\end{proof}

\section{From weak N\'eron models to N\'eron models}

Now let us continue the construction of N\'eron models.  We started
Section~\ref{sc:sm_prcss} with the schematic closure $A_0$ of our
abelian variety $A_K$ inside some proper $R$-scheme $B$.  According to
Theorem~\ref{theo_smoothening} applied with $E=A_0(R^{\mathrm{sh}})$, there
exists a proper morphism $A_1\to A_0$ which is an isomorphism on the
generic fibre, such that the smooth locus $A_2=(A_1)_{\mathrm{sm}}$ is a weak
N\'eron model.  We now prove that weak N\'eron models satisfy a
significant positive-dimensional reinforcement of their defining
property.

\begin{prop} \label{prop_WNMP}
Let $A$ be a weak N\'eron model of $A_K$. Then $A$ satisfies the {\em
  weak N\'eron mapping property}: each $K$-rational map
$u_K\colon Z_K\dashrightarrow A_K$ from the generic fibre of a smooth
$R$-scheme $Z$ extends uniquely to an $R$-rational map
$u\colon Z\dashrightarrow A$.
\end{prop}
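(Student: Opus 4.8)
The plan is to establish uniqueness by a separatedness argument and existence by reducing, at each generic point of the special fibre, to the defining extension property of the weak N\'eron model along $\Spec(R^{\mathrm{sh}})\to\Spec(R)$.

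For \textbf{uniqueness}, suppose $u$ and $u'$ are two $R$-rational extensions of $u_K$, defined on $R$-dense opens $U$ and $U'$. By Proposition~\ref{prop_3.2} the intersection $U\cap U'$ is again $R$-dense, hence $R$-flat, so its generic fibre $(U\cap U')_K$ is schematically dense in $U\cap U'$ (a flat scheme over the discrete valuation ring $R$ is torsion-free, so its generic fibre is schematically dense). The morphisms $u,u'\colon U\cap U'\to A$ agree on $(U\cap U')_K$ by hypothesis; since $A\to\Spec(R)$ is separated, their equalizer is a closed subscheme of $U\cap U'$, and containing a schematically dense open it must be all of $U\cap U'$. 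Thus $u=u'$ as $R$-rational maps.

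For \textbf{existence}, the first step is a geometric reduction. Since $Z\to\Spec(R)$ is smooth, $Z$ is regular and $Z_k$ is reduced, so by Proposition~\ref{prop_3.2} an open $U\subset Z$ is $R$-dense if and only if $U_K$ is dense in $Z_K$ and $U_k$ meets every generic point of $Z_k$. It therefore suffices to extend $u_K$ to an open neighborhood of each generic point $\zeta$ of $Z_k$, since gluing these neighborhoods to the domain of $u_K$ over the generic fibre then yields an $R$-dense domain of definition. As $Z$ is regular and the special fibre is the principal divisor $\{\pi=0\}$, where $\pi$ is a uniformizer of $R$, each such $\zeta$ has codimension one and $R_\zeta:=\mathcal{O}_{Z,\zeta}$ is a discrete valuation ring dominating $R$, with $\pi$ as uniformizer and fraction field $F$ equal to the function field of the corresponding component of $Z$. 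The rational map $u_K$ furnishes a point $a\in A_K(F)=A(F)$, and we are reduced to the \textbf{local claim}: $a$ extends to a point of $A(R_\zeta)$. Granting this, the $R_\zeta$-point spreads out to a morphism on a neighborhood of $\zeta$ which agrees with $u_K$ over the generic fibre, completing the construction.

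It remains to prove the local claim, and this is where I expect the \textbf{main obstacle} to lie. One first reduces to the strictly henselian case: if $a$ extends over $R_\zeta^{\mathrm{sh}}$ then, $A$ being of finite presentation, it already extends over some discrete valuation ring $R'$ \'etale over $R_\zeta$, and since $A$ is separated this extension is unique, so its two pullbacks to $R'\otimes_{R_\zeta}R'$ coincide and faithfully flat descent produces the desired point of $A(R_\zeta)$. To construct the extension over the strictly henselian local ring one exploits the smoothness of $Z$: after passing to the separably closed residue field one finds sections $\sigma\colon\Spec(R^{\mathrm{sh}})\to Z$ whose closed points specialize densely into $\overline{\{\zeta\}}$ and whose generic points lie in the domain of $u_K$. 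For each such $\sigma$ the point $u_K\circ\sigma_K\in A_K(K^{\mathrm{sh}})$ extends, by the defining property of the weak N\'eron model $A$ (the case $Z=\Spec(R^{\mathrm{sh}})$), to a point of $A(R^{\mathrm{sh}})$. Letting $\Gamma\subset Z\times_R A$ be the schematic closure of the graph of $u_K$, these points show that $\Gamma\to Z$ is surjective above a dense subset of $\overline{\{\zeta\}}$. The hard step is to upgrade this to the assertion that $\Gamma\to Z$ is an \emph{isomorphism} near $\zeta$, equivalently that no vertical component of $\Gamma$ escapes over $\zeta$; this is where one uses that $A$ is of finite type and that the weak N\'eron property is a \emph{bijectivity}, not merely a surjectivity, of $A(R^{\mathrm{sh}})\to A_K(K^{\mathrm{sh}})$. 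This last computation is exactly the one carried out in~\cite{BLR}, to which we refer for the details.
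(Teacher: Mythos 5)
Your framework---uniqueness by separatedness, reduction to the generic points $\zeta$ of $Z_k$, the schematic closure $\Gamma\subset Z\times A$ of the graph of $u_K$, and the use of $R^{\mathrm{sh}}$-sections of $Z$ together with the weak N\'eron property to show that $p_k\colon\Gamma_k\to Z_k$ hits a dense constructible set---is exactly the paper's, up to the decisive moment. But at that moment you stop: the ``local claim'' that the point $a\in A(F)$ extends to $A(\mathcal{O}_{Z,\zeta})$, equivalently that $p$ admits a section near $\zeta$, is the entire content of the proposition, and you outsource it to \cite{BLR}. That is a genuine gap. Your diagnosis of what is needed to close it is also slightly off: you invoke the \emph{bijectivity} of $A(R^{\mathrm{sh}})\to A_K(K^{\mathrm{sh}})$ and the need to exclude vertical components of $\Gamma$ over $\zeta$, whereas only the \emph{surjectivity} is used (it gives, via constructibility of the image of $p_k$, that the generic point $\eta=\zeta$ of $Z_k$ lies in that image), and one never needs $p$ to be an isomorphism over a whole neighbourhood of $\zeta$ in $\Gamma$---a single good branch suffices.

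Here is the missing step as the paper carries it out. Choose any $\xi\in\Gamma_k$ with $p(\xi)=\eta$. Since $\Gamma$ is the schematic closure of a subscheme of the generic fibre, it is $R$-flat, so $\mathcal{O}_{\Gamma,\xi}$ is torsion-free over $R$ and injects into $\mathcal{O}_{\Gamma,\xi}\otimes_R K$, which is a localization of the function field of $\Gamma_K\cong Z_K$ on the relevant component, i.e.\ of $\mathrm{Frac}(\mathcal{O}_{Z,\eta})$. Thus $\mathcal{O}_{\Gamma,\xi}$ is a local subring of $\mathrm{Frac}(\mathcal{O}_{Z,\eta})$ dominating the discrete valuation ring $\mathcal{O}_{Z,\eta}$; a discrete valuation ring being maximal for domination inside its fraction field, the map $\mathcal{O}_{Z,\eta}\to\mathcal{O}_{\Gamma,\xi}$ is an isomorphism. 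As $Z$ and $\Gamma$ are of finite presentation over $R$, this local isomorphism spreads out to an isomorphism $U\to V$ between open neighbourhoods $U\subset\Gamma$ of $\xi$ and $V\subset Z$ of $\eta$, and composing its inverse with the projection $\Gamma\to A$ extends $u_K$ over $V$. With this paragraph supplied, your argument is complete and coincides with the paper's proof.
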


Note that conversely, if the extension property of the proposition is
satisfied for a smooth and separated model $A$ of finite type, then
one sees that $A$ is a weak N\'eron model by taking $Z=\Spec(R')$ for
varying \'etale extensions $R'/R$.

\begin{proof}
Since $A$ is separated, we can first work on open subschemes of $Z$
with irreducible special fibre and then glue. In this way, we reduce
to the case where $Z$ has irreducible special fibre. Then removing
from $Z$ the scheme-theoretic closure of the exceptional locus of
$u_K$, we may assume that $u_K$ is defined everywhere. Let
$\Gamma_K\subset Z_K\times A_K$ be the graph of $u_K$, let
$\Gamma\subset Z\times A$ be its scheme-theoretic closure, and let
$p\colon\Gamma\to Z$ be the first projection. On the special fibre, the
image of $p_k$ contains all $k^{\rm{s}}$-points $z_k\in Z_k$: indeed,
since $Z$ is smooth each such point lifts to an $R^{\mathrm{sh}}$-point $z\in
Z(R^{\mathrm{sh}})$ with generic fibre $z_K$, and since $A$ is a weak N\'eron
model the image $x_K=u_K(z_K)$ extends to a point $x\in A(R^{\mathrm{sh}})$,
giving rise to a point $\gamma=(z,x)\in\Gamma$ such that
$z_k=p_k(\gamma_k)$. Since the image of $p_k$ is constructible,
containing the dense set $Z_k(k^{\rm{s}})$, it contains an open set of
$Z_k$.

In particular, the generic point $\eta$ of $Z_k$ is the image of a
point $\xi\in\Gamma_k$. Since the local rings $\mathcal{O}_{Z,\eta}$
(a discrete valuation ring with the same uniformizer as $R$) and
$\mathcal{O}_{\Gamma,\xi}$ are $R$-flat and $\mathcal{O}_{Z,\eta}\to
\mathcal{O}_{\Gamma,\xi}$ is an isomorphism on the generic fibre, one
sees that $\mathcal{O}_{\Gamma,\xi}$ is included in the fraction field
of $\mathcal{O}_{Z,\eta}$.  Given that $\mathcal{O}_{\Gamma,\xi}$
dominates $\mathcal{O}_{Z,\eta}$, it follows that
$\mathcal{O}_{Z,\eta}\to \mathcal{O}_{\Gamma,\xi}$ is an
isomorphism. The schemes $Z$ and $\Gamma$ being of finite presentation
over~$R$, the local isomorphism around $\xi$ and $\eta$ extends to an
isomorphism $U\to V$ between open neighbourhoods $U\subset\Gamma$ and
$V\subset Z$. By inverting this isomorphism and composing with the
projection $\Gamma\to A$, one obtains an extension of $u_K$ to $V$.
\end{proof}

In the final step of the construction of the N\'eron model,
we make crucial use of the
group structure of $A_K$ and in particular of the existence of
invariant volume forms.

Quite generally, if $S$ is a scheme and $G$ is a smooth $S$-group
scheme of relative dimension~$d$, it is known that the sheaf of
differential forms of maximal degree
$\Omega^d_{G/S}=\wedge^d\Omega^1_{G/S}$ is an invertible sheaf that
may be generated locally by a left-invariant differential form (see
\cite{BLR} 4.2). If $G$ is commutative, left-invariant differential
forms are also right-invariant and we call them simply {\em invariant
  forms}. Thus on the N\'eron model of $A_K$, provided it exists,
there should be an invariant global non-vanishing $d$-form, also
called an {\em invariant volume form}, with $d=\dim(A_K)$.  It is the
search for such a form that motivates the following constructions.

We start by choosing an invariant volume form $\omega$ for $A_K$,
uniquely determined up to a constant in~$K^*$.  If $A$ is a model of
$A_K$ which is smooth, separated and of finite type, then all its
fibres have pure dimension $d$ and the sheaf of differential $d$-forms
$\Omega^d_{A/R}=\wedge^d\Omega^1_{A/R}$ is invertible. Moreover, if
$\eta$ is a generic point of the special fibre $A_k$, its local ring
$\mathcal{O}_{A,\eta}$ is a discrete valuation ring with maximal ideal
generated by a uniformizer $\pi$ for~$R$. Then the stalk of
$\Omega^d_{A/R}$ at $\eta$ is a free $\mathcal{O}_{A,\eta}$-module of
rank one which may be generated by $\pi^{-r}\omega$ for a unique
integer $r\in \mathbb{Z}$ called the {\em order of $\omega$ at $\eta$} and
denoted $\ord_\eta(\omega)$. If $W$ is the irreducible component with
generic point~$\eta$, this is also called the {\em order of $\omega$
  along~$W$} and denoted $\ord_W(\omega)$. Moreover, if $\rho$ denotes
the minimum of the orders $\ord_W(\omega)$ along the various
components of $A_k$, then by changing $\omega$ into
$\pi^{-\rho}\omega$ we may and will assume that $\rho=0$. A
component~$W$ with $\ord_W(\omega)=0$ will be called {\em minimal}.

In the previous sections, we saw that blowing up in a clever way
finitely many times in the special fibre of a model of $A_K$, and
removing the non-smooth locus, we obtained a weak N\'eron model
$A_2$. Now, we consider the open subscheme $A_3\subset A_2$ obtained
by removing all the non-minimal irreducible components of the special
fibre.

\begin{lemm}
The section $\omega$ extends to a global section of $\Omega^d_{A_2/R}$
and its restriction to $A_3$ is a global generator of
$\Omega^d_{A_3/R}$.
\end{lemm}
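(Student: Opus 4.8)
The plan is to view $\omega$ as a rational section of the invertible sheaf $L:=\Omega^d_{A_2/R}$ and to read off both assertions from its divisor. First I would record the geometry of $A_2$. Since $A_0$ is the schematic closure of the integral scheme $A_K$, and since blowing up and passing to the smooth locus (an open subscheme) preserve integrality, the weak N\'eron model $A_2$ is integral; being smooth over the regular ring $R$ it is regular, hence normal, and its generic fibre is $A_K$. Thus $L$ is a line bundle on a normal integral noetherian scheme, and $\omega$, an invariant (so nowhere vanishing) top form on the abelian variety $A_K=(A_2)_K$, is a nonzero rational section of $L$.

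Next I would compute $\operatorname{div}(\omega)$. The prime divisors of $A_2$ are of two kinds: the horizontal ones, whose generic point lies in $A_K$, and the vertical ones, namely the irreducible components $W$ of the special fibre $A_k=V(\pi)$ (which is pure of codimension one since $A_2\to\Spec(R)$ is flat). Along a horizontal divisor $\omega$ has order $0$, because it generates $\Omega^d_{A_K/K}$ on the abelian variety. Along a vertical component $W$ with generic point $\eta$, smoothness makes $A_k$ reduced, so $\mathcal{O}_{A_2,\eta}$ is a discrete valuation ring with uniformizer $\pi$, and the multiplicity of $\operatorname{div}(\omega)$ there is exactly the integer $\ord_W(\omega)$ introduced above. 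Hence
\[
\operatorname{div}(\omega)=\sum_{W}\ord_W(\omega)\,[W],
\]
the sum over components of $A_k$. The normalization $\rho=\min_W\ord_W(\omega)=0$ says all these multiplicities are $\ge 0$, so $\operatorname{div}(\omega)$ is effective. On the normal scheme $A_2$ an effective divisor forces the rational section to be regular (a rational function with nonnegative valuation at every height-one prime of a normal domain lies in the ring), so $\omega$ extends to a global section of $L$; this is the first assertion.

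For the second assertion I would restrict to $A_3$. By construction $A_3=A_2\setminus\bigcup_{W\text{ non-minimal}}W$, so the only vertical components surviving in $(A_3)_k$ are the minimal ones, for which $\ord_W(\omega)=0$; together with the horizontal divisors (also of order $0$) this gives $\operatorname{div}(\omega|_{A_3})=0$. A global section of a line bundle on a normal integral scheme whose divisor vanishes is nowhere vanishing: its zero scheme would be an effective Cartier divisor, pure of codimension one, hence would contribute a prime divisor of positive order, of which there is none. Therefore $\omega|_{A_3}$ trivializes $\Omega^d_{A_3/R}$, i.e. is a global generator.

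There is no deep obstacle here; the statement is essentially the divisor-theoretic reformulation of the words \emph{minimal component}. The main thing to get right is the two local inputs feeding the computation of $\operatorname{div}(\omega)$: that $\pi$ is a uniformizer of $\mathcal{O}_{A_2,\eta}$, so that the pointwise order $\ord_W(\omega)$ agrees with the multiplicity of the Weil divisor (this uses that the special fibre of a smooth morphism is reduced), and that no horizontal prime divisor contributes (this uses that the invariant form is nowhere vanishing on $A_K$). Both the extension and the generation statements then follow formally from the injectivity of $\operatorname{div}$ on a normal scheme.
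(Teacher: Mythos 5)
Your argument is correct and is essentially the paper's own proof, just written out in more detail: the extension step is algebraic Hartogs on the normal scheme $A_2$ (nonnegative order along every height-one prime, i.e.\ effectivity of $\operatorname{div}(\omega)$), and the generation step is the observation that the zero locus of a section of a line bundle on an integral scheme is pure of codimension one, which is empty on $A_3$ since all remaining components are minimal. No substantive difference from the paper's route.
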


\begin{proof}
Since $A_2$ is normal and $\omega$ is defined in codimension $\le 1$,
it extends to a global section of $\Omega^d_{A_2/R}$. Now, recall that
the zero locus of a nonzero section of a line bundle on an integral
scheme has pure codimension $1$.  Thus since the restriction of
$\omega$ to $A_3$ does not vanish in codimension $\le 1$, it does not
vanish at all and hence extends to a global generator of
$\Omega^d_{A_3/R}$.
\end{proof}

Now we denote by $m_K\colon A_K\times A_K\to A_K$ the multiplication
of the abelian variety~$A_K$.

\begin{theo} \label{th_strict_group_law}
The morphism $m_K\colon A_K\times A_K\to A_K$ extends to an
$R$-rational map $m\colon A_3\times A_3\dashrightarrow A_3$ and
the $R$-rational maps $\Phi,\Psi\colon A_3\times A_3\dashrightarrow
A_3\times A_3$ defined~by
\[
\begin{array}{c}
\Phi(x,y)=(x,xy) \medskip \\
\Psi(x,y)=(xy,y)
\end{array}
\]
are $R$-birational. In other words, $m$ is an $R$-birational group law
on $A_3$.
\end{theo}

\begin{proof}
Applying the weak N\'eron mapping property
(Proposition~\ref{prop_WNMP}), we can extend $m_K$ to an $R$-rational
map $m\colon A_3\times A_2\dashrightarrow A_2$.  We wish to prove that
$m$ induces an $R$-rational map $A_3\times A_3\dashrightarrow A_3$.  Let
$D\subset A_3\times A_2$ be the domain of definition of $m$. We define
a morphism $\varphi\colon D\to A_3\times A_2$ by the formula
$\varphi(x,y)=(x,xy)$ and we view it
as a morphism of $A_3$-schemes in the obvious way. Denote by the same
symbol $\omega'$ the pullback of $\omega$ via the projection
$\pr_2\colon A_3\times A_2\to A_2$ and its restriction to $D$. We
claim that $\varphi^*\omega'=\omega'$: indeed, this holds on the
generic fibre because~$\varphi$ is an $A_3$-morphism of left
translation, so this holds everywhere by density. Now let
$\xi=(\alpha,\beta)$ be a generic point of the special fibre of
$A_3\times A_3$ and $\eta=(\alpha,\gamma)$ its image
under~$\varphi$. Let $r=\ord_\gamma(\omega)=\ord_\eta(\omega')\ge 0$.
Then $\omega'$ is a generator of $\Omega^d_{D/A_3}$ at $\xi$ and
$\pi^{-r}\omega'$ is a generator of $\Omega^d_{A_3\times A_2/A_3}$ at
$\eta$.  It follows that $\varphi^*(\pi^{-r}\omega')=b\omega'$ for
some germ of function $b$ around $\xi$. Since
$\varphi^*(\pi^{-r}\omega')=\pi^{-r}\omega'$, this implies that $r=0$
hence $\eta\in A_3\times A_3$. This shows that the set of irreducible
components of the special fibre of $A_3\times A_3$ is mapped into
itself by $\varphi$.  Setting $U=D\cap (A_3\times A_3)$ we obtain
morphisms $\varphi\colon U\to A_3\times A_3$ and
$m=\pr_2\circ\varphi\colon U\to A_3$ that define the sought-for rational
maps.  Proceeding in the same way with the morphism $\psi\colon D\to
A_3\times A_2$ defined by $\psi(x,y)=(xy,y)$, we see that it also
induces a morphism $\psi\colon U\to A_3\times A_3$.  In this way we obtain
the $R$-rational maps $m,\Phi,\Psi$ of the theorem.

In order to prove that $\Phi$ induces an isomorphism of $U$ onto an
$R$-dense open subscheme, we show that $\varphi\colon U\to A_3\times
A_3$ is an open immersion.  We saw above that the map
\[
\varphi^*\Omega^d_{A_3\times A_3/A_3}\lto \Omega^d_{U/A_3}
\]
takes the generator $\omega'$ to itself, so it is an isomorphism. This
map is nothing else than the determinant of the morphism
\[
\varphi^*\Omega^1_{A_3\times A_3/A_3}\lto \Omega^1_{U/A_3}
\] 
on the level of $1$-forms which thus is also an isomorphism. It
follows that $\varphi$ is \'etale, and in particular
quasi-finite. Since it is an isomorphism on the generic fibre, it is
an open immersion by Zariski's Main Theorem
(\cite{EGA}~IV$_3$.8.12.10). One proves the required property for
$\Psi$ in a similar way.
\end{proof}

Let $U$ be the domain of definition of the $R$-rational map $m\colon
A_3\times A_3\dashrightarrow A_3$. Then the graph~$W$ of $m|_U\colon
U\to A_3$ is a strict $R$-rational group law on $A_3/R$ in the sense
of Definition~\ref{def_strict-S-rat-grlaw}. It follows from
Theorems~\ref{thm-alg-space} and~\ref{cor-alg-space} that
there exists an open $R$-dense immersion of~$A_3$ into a smooth
separated $R$-group scheme of finite type $A_4$. The last thing we
wish to do is to check that $A_4$ is the N\'eron model of~$A_K$:

\begin{prop}\label{prop_univ_property}
The group scheme $A_4$ is the N\'eron model of $A_K$, that is, each
$K$-morphism $u_K\colon Z_K\to A_K$ from the generic fibre of a smooth
$R$-scheme $Z$ extends uniquely to an $R$-morphism $u\colon Z\to A_4$.
\end{prop}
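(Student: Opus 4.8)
The plan is to prove uniqueness from separatedness, and to build the extension in three steps: first upgrade $A_4$ to a weak N\'eron model, then feed it into the weak N\'eron mapping property, and finally apply Weil's extension theorem for smooth separated group schemes (used as a black box, cf.\ \cite{BLR}~4.4) to turn the resulting rational map into an honest morphism. Uniqueness is immediate: a smooth $R$-scheme is $R$-flat, so $Z_K$ is schematically dense in $Z$; since $A_4\to\Spec(R)$ is separated, two $R$-morphisms $Z\to A_4$ agreeing on $Z_K$ coincide (Proposition~\ref{prop_3.2}(4)). Thus it suffices to produce one extension.

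The first and main step is to show that $A_4$ is itself a weak N\'eron model, i.e.\ that $A_4(R^{\mathrm{sh}})=A_K(K^{\mathrm{sh}})$. By Theorems~\ref{thm-alg-space} and~\ref{cor-alg-space}, $A_4$ is a smooth, separated $R$-group scheme of finite type containing $A_3$ as an $R$-dense open subscheme, and $\delta\colon A_3\times_R A_3\to A_4$, $(a,b)\mapsto\phi(a)\phi(b)^{-1}$, is faithfully flat (Lemma~\ref{lem_delta_surj}). Given $x\in A_K(K^{\mathrm{sh}})$ one wants $a,b\in A_3(R^{\mathrm{sh}})$ with $x=\phi(a)\phi(b)^{-1}$, for then $\delta(a,b)\in A_4(R^{\mathrm{sh}})$ restricts to $x$ on the generic fibre. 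Here the group structure is essential: the weak N\'eron model $A_2$ of the previous section extends $x$ to an $R^{\mathrm{sh}}$-point, but its reduction may fall in one of the non-minimal components of $(A_2)_k$ discarded in passing to $A_3$. One removes this defect by a translation, writing $a=xb$ and choosing $b\in A_3(R^{\mathrm{sh}})$ so that both $b$ and $xb$ reduce into minimal components; the faithful flatness of $\delta$ guarantees that every point of $(A_4)_k$ is such a quotient, and the invariance of the volume form $\omega$ --- which makes $\omega$ a global generator of $\Omega^d_{A_4/R}$, so that \emph{all} components of $(A_4)_k$ are minimal --- is what keeps the translated point inside the minimal locus.

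Once $A_4$ is known to be a weak N\'eron model, I would apply the weak N\'eron mapping property (Proposition~\ref{prop_WNMP}) to $A_4$ and the smooth $R$-scheme $Z$: this extends $u_K$ to an $R$-rational map $u\colon Z\dashrightarrow A_4$ which, by the proof of that proposition, is already defined at every generic point of the special fibre $Z_k$. Since $u_K$ is a morphism on all of $Z_K$, the map $u$ is then defined at every point of $Z$ of codimension $\le 1$, and Weil's extension theorem for rational maps from smooth schemes into smooth separated group schemes (whose whole force is to cross the codimension $\ge 2$ locus, using the group law) promotes it to a morphism $u\colon Z\to A_4$, as desired.

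The delicate point, and the main obstacle, is the first step: controlling the reduction of translates so as to land in the minimal components, equivalently the weak N\'eron property of $A_4$. This is exactly where the normalization of $\omega$ and the group law interact; as noted in the introduction, there is nothing to add here to the arguments of \cite{BLR}, and the subsequent passage from codimension one to a genuine morphism is then precisely the content of the cited extension theorem.
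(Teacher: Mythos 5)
Your overall strategy is sound, and both the uniqueness argument and the final appeal to \cite{BLR}~4.4/1 match the paper, but you take a genuinely different route through the middle. The paper never proves that $A_4$ is a weak N\'eron model. Instead it extends the two-variable map $\tau_K(z,x)=u_K(z)x$: the weak N\'eron mapping property of $A_2$ gives $\tau_2\colon Z\times A_2\dashrightarrow A_2$, the volume-form computation of Theorem~\ref{th_strict_group_law} (run over the base $Z$ instead of $A_3$) shows $\tau_2$ sends minimal components to minimal components and hence restricts to $Z\times A_3\dashrightarrow Z\times A_3$, and then \cite{BLR}~4.4/1 applied to the resulting $Z\times A_4\dashrightarrow A_4$ produces a morphism whose restriction to $Z\times\{e\}$ is the desired $u$. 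You instead first upgrade $A_4$ to a weak N\'eron model and then apply Proposition~\ref{prop_WNMP} to $A_4$ itself; this is essentially the criterion of \cite{BLR}, Chapter~7, and what it buys is a reusable statement (a smooth separated group scheme of finite type with surjective $A_4(R^{\mathrm{sh}})\to A_K(K^{\mathrm{sh}})$ is a N\'eron model), at the cost of an extra step. The paper's version buys economy: the translation argument is done once, in families over $Z$, and the surjectivity of $A_4(R^{\mathrm{sh}})\to A_K(K^{\mathrm{sh}})$ falls out as the special case $Z=\Spec(R^{\mathrm{sh}})$ rather than being an input.

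The one place your sketch is too thin is precisely your ``first and main step''. The appeal to faithful flatness of $\delta$ is off target: that every point of $(A_4)_k$ lies in the image of $\delta$ says nothing about lifting a given $x\in A_K(K^{\mathrm{sh}})$; and knowing that all components of $(A_4)_k$ are minimal does not by itself force the translate $xb$, which a priori only lives in $A_2(R^{\mathrm{sh}})$, to reduce into $A_3$. What is actually needed is: extend the single left translation $\ell_x\colon A_{K^{\mathrm{sh}}}\to A_{K^{\mathrm{sh}}}$ to an $R^{\mathrm{sh}}$-rational map on $A_2\otimes_R R^{\mathrm{sh}}$ by the weak N\'eron mapping property, run the $\omega$-argument of Theorem~\ref{th_strict_group_law} to see that it restricts to $A_3\otimes_R R^{\mathrm{sh}}\dashrightarrow A_3\otimes_R R^{\mathrm{sh}}$, and use smoothness and surjectivity of $A_3\to\Spec(R)$ together with henselianity to choose $b\in A_3(R^{\mathrm{sh}})$ reducing into the domain of definition; then $a:=\ell_x(b)\in A_3(R^{\mathrm{sh}})$ and $\delta(a,b)\in A_4(R^{\mathrm{sh}})$ lifts $x$. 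With that replacement your argument is correct --- and note that it is the same volume-form computation the paper performs over the base $Z$, so on either route this computation, not \cite{BLR}~4.4/1 alone, is the part that cannot be black-boxed away.
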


\begin{proof}
Let us consider the $K$-morphism $\tau_K\colon Z_K\times A_K\to A_K$
defined by $\tau_K(z,x)=u_K(z)x$. Applying the weak N\'eron mapping
property, this extends to an $R$-rational map $\tau_2\colon Z\times
A_2\dashrightarrow A_2$. In a similar way as in the proof
of~\ref{th_strict_group_law}, one proves that the induced $R$-rational
map $Z\times A_2\dashrightarrow Z\times A_2$ defined by $(z,x)\mapsto
(z,\tau_2(z,x))$ restricts to an $R$-rational map $Z\times
A_3\dashrightarrow Z\times A_3$. Since $A_3$ is $R$-birational to
$A_4$, the latter may be seen as an $R$-rational map $Z\times
A_4\dashrightarrow Z\times A_4$.  Composing with the second
projection, we obtain an $R$-rational map $\tau_4\colon Z\times
A_4\dashrightarrow A_4$ extending the map $\tau_K$. By Weil's theorem
on the extension of rational maps from smooth $R$-schemes to smooth
and separated $R$-group schemes (\cite{BLR} 4.4/1), the latter is
defined everywhere and extends to a morphism. Restricting $\tau_4$ to
the product of $Z$ with the unit section of $A_4$, we obtain the
sought-for extension of $u_K$. The fact that this extension is unique
follows immediately from the separation of $A_4$.
\end{proof}

\begin{rema}
Raynaud proved that the N\'eron model $A_4$ is quasi-projective over $R$.
In fact, one knows that there exists an ample invertible sheaf
$\mathcal{L}_K$ on $A_K$. Raynaud proved that there exists an integer~$n$ such
that the sheaf $(\mathcal{L}_K)^{\otimes n}$ extends to an $R$-ample invertible
sheaf on $A_4$, see~\cite{Ra2}, theorem VIII.2.
\end{rema}

\bigskip

\noindent {\bf Acknowledgements.}
We thank Ariane M\'ezard for fruitful discussions we had with her on the
topics discussed in this article. For example, we owe to her the inclusion
of remark~\ref{rem_extensions} on extensions of algebraic groups.
We also wish to thank M. Raynaud, Q.~Liu, and
C.~P\'epin for various interesting comments on a preliminary version of this
article. We are grateful to Ph.~Gille and P.~Polo for inviting us to take
part in the reflection on SGA3.

\bigskip

\def\refname{References}

\end{document}